\newtheorem{theorem}{Theorem}[section]
\newtheorem{proposition}[theorem]{Proposition}
\newtheorem{corollary}[theorem]{Corollary}
\theoremstyle{definition}
\newtheorem{definition}[theorem]{Definition}
\newtheorem{remark}[theorem]{Remark}
\newtheorem{problem}[theorem]{Problem}
\begin{document}


\title[]
{Gauss maps of the Ricci-mean curvature flow}


\author{Naoyuki Koike}
\address{Department of Mathematics, Faculty of Science, Tokyo University of Science, 1-3 Kagurazaka, Shinjuku-ku, Tokyo 162-8601, Japan}
\email{koike@rs.kagu.tus.ac.jp}

\author{Hikaru Yamamoto}
\address{Department of Mathematics, Faculty of Science, Tokyo University of Science, 1-3 Kagurazaka, Shinjuku-ku, Tokyo 162-8601, Japan}
\email{hyamamoto@rs.tus.ac.jp}




\begin{abstract}
In this paper, we investigate the Gauss maps of a Ricci-mean curvature flow. 
A Ricci-mean curvature flow is a coupled equation of a mean curvature flow and a Ricci flow on the ambient manifold. 
Ruh and Vilms \cite{RuhVilms} proved that the Gauss map of a minimal submanifold in a Euclidean space is a harmonic map, 
and  Wang \cite{Wang} extended this result to a mean curvature flow in a Euclidean space by proving 
its Gauss maps satisfy the harmonic map heat flow equation. 
In this paper, we deduce the evolution equation for the Gauss maps of a Ricci-mean curvature flow, 
and as a direct corollary we prove that the Gauss maps of a Ricci-mean curvature flow satisfy the vertically harmonic map heat flow equation 
when the codimension of submanifolds is 1. 
\end{abstract} 


\keywords{Gauss map, mean curvature flow, Ricci flow, coupled flow}


\subjclass[2010]{53C42, 53C44}


\thanks{The first author was supported by JSPS KAKENHI Grant Number 25400076. The second author was supported by JSPS KAKENHI Grant Number 16H07229.}



\maketitle



\section{Introduction}\label{intro}
Let $F:M\to \mathbb{R}^{n}$ be an immersion from an $\ell$-dimensional manifold $M$. 
For each point $p$ in $M$, the tangent space $F_{*}(T_{p}M)$ of $F(M)$ at $p$ 
is an affine subspace through $F(p)$, and translate it so that it passes through the origin on $\mathbb{R}^{n}$. 
Then, we get an $\ell$-dimensional (linear) subspace in $\mathbb{R}^{n}$, that is, an element of a Grassmannian $G_{\ell}(\mathbb{R}^{n})$, and denote it by $\gamma_{F}(p)$. 
The map $\gamma_{F}:M\to G_{\ell}(\mathbb{R}^{n})$ is called the Gauss map, and it has been appeared and studied in many situations, especially geometry of submanifolds. 
One of famous results about the Gauss map is due to Ruh and Vilms \cite{RuhVilms}. 
They proved that 
\begin{align}\label{RV}
\tau(\gamma_{F})=\nabla^{N}H(F), 
\end{align}
where $\tau(\gamma_{F})$ denotes the tension field of the Gauss map $\gamma_{F}:M\to G_{\ell}(\mathbb{R}^{n})$, 
$H(F)$ denotes the mean curvature vector field of $F:M\to \mathbb{R}^{n}$ 
and $\nabla^{N}H(F)$ denotes its covariant derivative with respect to the normal connection $\nabla^{N}$ of the normal bundle over $M$. 
As a direct corollary of the equation (\ref{RV}), it follows that the Gauss map of a minimal immersion is a harmonic map. 
Thus, we have a bridge from minimal submanifolds in Euclidean spaces to harmonic maps into Grassmannians. 

In this paper, we extend the result due to Ruh and Vilms from the viewpoint that it is just a static aspect of some more general phenomena which stand for flows of submanifolds. 
In this stream, Wang \cite{Wang} considered a mean curvature flow $F_{t}: M\to \mathbb{R}^{n}$, that is, a 1-parameter family of immersions which satisfies 
\[\frac{\partial F_{t}}{\partial t}=H(F_{t}). \]
Then, Theorem A in \cite{Wang} says the 1-parameter family of Gauss maps associated with $F_{t}$ is a harmonic map heat flow, that is, $\gamma_{F_{t}}:M\to G_{\ell}(\mathbb{R}^{n})$ satisfies
\begin{align}\label{harm}
\frac{\partial \gamma_{F_{t}}}{\partial t}=\tau(\gamma_{F_{t}}). 
\end{align}
In the ordinary sense, a harmonic map heat flow is defined for maps from a static Riemannian manifold $(M,h)$ to also a static $(N,\tilde{g})$. 
However, to get the simple formula (\ref{harm}), we need to use the time dependent metric $F_{t}^{*}(dx^2)$ on $M$. 
The metric on $G_{\ell}(\mathbb{R}^{n})$ is a fixed standard one. 
Hence, the notion of the harmonic map heat flow stated in \cite{Wang} is slightly different from the ordinary one. 
Of course, it is clear that (\ref{harm}) also implies that the Gauss map of a minimal immersion is a harmonic map. 

Motivated by Wang's work, in this paper we generalize Theorem A in \cite{Wang} to immersions in general ambient Riemannian manifolds. 
Let $(N,g)$ be an $n$-dimensional Riemannian manifold and $F:M\to N$ be an immersion from an $\ell$-dimensional manifold. 
We denote the Grassmann bundle which consists of all $m$-dimensional subspace in each $T_{p}N$ by $G_{m}(TN)$, where 
$m:=n-\ell$ is the codimension. We denote the projection by $\pi:G_{m}(TN)\to N$. 
Then there is a natural Riemannian metric $\tilde{g}$ called a {\it Sasaki metric} on $G_{m}(TN)$. 
See Definition \ref{Sasaki} for details. 

\begin{definition}\label{Gauss00}
We call a smooth map $\gamma_{F}:M\to G_{m}(TN)$ defined by 
\begin{align}\label{Gauss0}
\gamma_{F}(p):=(F_{*}(T_{p}M))^{\bot} \in G_{m}(T_{F(p)}N) \subset G_{m}(TN)
\end{align}
the {\it Gauss map} associated with $F$, where $(F_{*}(T_{p}M))^{\bot}$ denotes the orthogonal complement of  $F_{*}(T_{p}M)$, 
that is, the normal space of $F(M)$ at $p$. 
\end{definition}

In \cite{RuhVilms}, Ruh and Vilms considered the assignment $p\mapsto F_{*}(T_{p}M)$ which should be called the tangential Gauss map more precisely, 
and the map in Definition \ref{Gauss00} should be called the normal Gauss map. 
However, we just call the latter the Gauss map. 
When the ambient Riemannian metric is fixed, taking tangential or normal has same information. 
However, in this paper, we will deform Riemannian metrics on the ambient. 
In such a situation, the normal Gauss map is affected by the deformation, nevertheless the tangential Gauss map is not. 
Hence, taking the normal in Definition \ref{Gauss00} of the Gauss map is essential in this paper. 

To state main theorems, we prepare some notions. 
Let $(G,\tilde{g}_{t})$ and $(N,g_{t})$ be 1-parameter families of Riemannian manifolds defined on a time interval $[0,T)$. 
Assume that there exists a (time independent) submersion $\pi:G\to N$ such that it is a Riemannian submersion from $(G,\tilde{g}_{t})$ to $(N,g_{t})$ for each $t\in [0,T)$. 
We define a (time independent) distribution $\mathcal{V}$ on $G$ by $\mathcal{V}:=\ker \pi_{*}$ and call it the vertical distribution, 
and define a time dependent distribution $\mathcal{H}_{t}$ on $G$ by $\mathcal{H}_{t}:=\mathcal{V}^{\bot_{t}}$, where $\bot_{t}$ means the orthogonal complement with respect to $\tilde{g}_{t}$ 
and call it the horizontal distribution. 
Then, by the assumption, the restriction of $\pi_{*}$ to $\mathcal{H}_{t}$ is a fiberwise linear isometry from $(\mathcal{H}_{t},\tilde{g}_{t})$ to $(TN,g_{t})$. 
We denote the vertical part of a vector field $X$ on $G$ by $X^{v}$. 

\begin{definition}\label{vhmhf}
A pair of 1-parameter families of Riemannian manifolds $(M,h_{t})$ and smooth maps $\gamma_{t}:M\to G$ defined on a time interval $[0,T)$ 
is called a {\it vertically harmonic map heat flow} if it satisfies 
\begin{align}\label{vhf}
\left(\frac{\partial \gamma_{t}}{\partial t}\right)^{v}=\tau(\gamma_{t})^{v}, 
\end{align}
where $\tau(\gamma_{t})$ is the tension field of the map $\gamma_{t}:(M,h_{t}) \to (G,\tilde{g}_{t})$. 
\end{definition}

In this paper, $G$ is just the Grassmann bundle $G_{m}(TN)$ over $N$ and $\tilde{g}_{t}$ is the Sasaki metric associated with $g_{t}$. 
For the definition of the Sasaki metric, see Definition \ref{Sasaki}. 
In such a case, as explained in Section \ref{RGGB}, a fiber $\mathcal{V}_{W}$, which is equal to $T_{W}(G_{m}(T_{\pi(W)}N))$, over a point $W\in G_{m}(TN)$ is canonically identified with $\mathop{\mathrm{Hom}}(W,W^{\bot})$. 

\begin{definition}\label{RgF}
Let $(N,g)$ be an $n$-dimensional Riemannian manifold. 
For each point $W$ in $G_{m}(TN)$, we define an element $(\mathcal{R}(g))(W)$ in $\mathop{\mathrm{Hom}}(W,W^{\bot})\cong\mathcal{V}_{W}$ by 
\[((\mathcal{R}(g))(W))(w):=\sum_{i=1}^{m}\left(R(w,\nu_{i})\nu_{i}\right)_{W^{\bot}}, \]
where $R$ is the Riemannian curvature operator of $g$ and $\{\, \nu_{i}\,\}_{i=1}^{m}$ is an orthonormal basis of $W$ and $(\,\ast\,)_{W^{\bot}}$ means the $W^{\bot}$-component of $\ast$. 
Then, we define a vertical vector field $\mathcal{R}(g)$ on $G_{m}(TN)$, that is, a section of $\mathcal{V}$ by
\[W\mapsto (\mathcal{R}(g))(W)\]
for each point $W$ in $G_{m}(TN)$. 
\end{definition}

Then, our first main theorem is the following. 

\begin{theorem}\label{main1}
Let $(N,g_{t})$, $f_{t}:N\to \mathbb{R}$ and $F_{t}:M\to N$ be a 1-parameter family of $n$-dimensional Riemannian manifolds, 
smooth functions on $N$ and immersions from an $\ell$-dimensional manifold $M$ respectively defined on a time interval $[0,T)$ satisfying
\begin{subequations}
\begin{align}
\frac{\partial g_{t}}{\partial t}=&-\mathop{\mathrm{Ric}}(g_{t})+f_{t}g_{t} \label{rf1}\\ 
\frac{\partial F_{t}}{\partial t}=&H_{g_{t}}(F_{t}), \label{rmcf1}
\end{align}
\end{subequations}
where $\mathop{\mathrm{Ric}}(g_{t})$ is the Ricci curvature of $g_{t}$ and $H_{g_{t}}(F_{t})$ is the mean curvature vector field of $F_{t}:M\to N$ with respect to the metric $g_{t}$ on $N$. 
Then the Gauss map $\gamma_{F_{t}}:M \to G_{m}(TN)$ $(m:=n-\ell)$ satisfies 
\begin{align}\label{general}
\left(\frac{\partial \gamma_{F_{t}}}{\partial t}\right)^{v}=\tau(\gamma_{F_{t}})^{v}+\mathcal{R}(g_{t})\circ \gamma_{F_{t}}
\end{align}
with respect to the induced metric $F_{t}^{*}g_{t}$ on $M$ and the Sasaki metric $\tilde{g}_{t}$ on $G_{m}(TN)$ associated with $g_{t}$. 
\end{theorem}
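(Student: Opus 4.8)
The plan is to compute both sides of \eqref{general} in local coordinates adapted to a Riemannian submersion structure, reducing the statement to a pointwise identity comparing the time derivative of the Gauss map with its tension field plus the curvature term $\mathcal{R}(g_t)$. First I would fix a point $p\in M$ and a time $t_0$, choose normal coordinates on $(M,F_{t_0}^*g_{t_0})$ at $p$ together with a local orthonormal frame $\{e_1,\dots,e_\ell\}$ of the tangent bundle and $\{\nu_1,\dots,\nu_m\}$ of the normal bundle along $F_{t_0}$, parallel at $p$ for the appropriate connections. In these coordinates the vertical fiber $\mathcal{V}_W\cong\mathop{\mathrm{Hom}}(W,W^\bot)$ is concretely spanned by the elementary homomorphisms $\nu_a\mapsto e_\alpha$, so that each of the three terms in \eqref{general} can be written as a matrix-valued expression in the second fundamental form $A$, its derivatives, and the ambient curvature $R$.

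The core computation has three ingredients. (i) \emph{The time derivative $\left(\partial\gamma_{F_t}/\partial t\right)^v$:} since $\gamma_{F_t}(p)=(F_{t*}T_pM)^\bot$, differentiating the orthogonality conditions $g_t(F_{t*}e_i,\nu_a)=0$ and $g_t(\nu_a,\nu_b)=\delta_{ab}$ in $t$, and using \eqref{rmcf1} together with $\partial_t g_t=-\mathop{\mathrm{Ric}}(g_t)+f_tg_t$ from \eqref{rf1}, yields the vertical component of $\partial_t\gamma_{F_t}$ as a combination of $\langle\nabla_{e_i}H,\nu_a\rangle$-type terms (from moving $F_t$) and $\mathop{\mathrm{Ric}}$-terms (from moving $g_t$); the conformal term $f_tg_t$ drops out because it acts as a scalar on both $TM$ and the normal bundle and hence contributes nothing to the $\mathop{\mathrm{Hom}}(W,W^\bot)$-part. (ii) \emph{The tension field $\tau(\gamma_{F_t})^v$:} here I would invoke the structure of the Sasaki metric (Definition \ref{Sasaki}) and the general formula for the tension field of a map into the total space of a Riemannian submersion, so that $\tau(\gamma_{F_t})$ splits into a horizontal part, which matches $\pi\circ\gamma_{F_t}=F_t$ and is irrelevant here, and a vertical part computed from the covariant Hessian of $\gamma_{F_t}$ in the fiber directions; this is essentially the Ricci--Vilms computation \eqref{RV} redone with the $g_t$-dependence and carefully projected to $\mathcal{V}$. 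The Codazzi equation converts the trace of $\nabla A$ into $\nabla^N H$ plus an ambient curvature correction. (iii) \emph{Matching:} subtracting, the second-fundamental-form terms cancel exactly as in Wang's theorem, the $\nabla^N H$ terms cancel against the $F_t$-variation, and what remains on the right is precisely $\sum_a (R(\cdot,\nu_a)\nu_a)_{W^\bot}$, i.e. $\mathcal{R}(g_t)\circ\gamma_{F_t}$, where the curvature contributions of type (i) and type (ii) combine — the Ricci term from $\partial_t g_t$ being exactly the trace over the \emph{normal} frame that Definition \ref{RgF} prescribes.

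I expect the main obstacle to be bookkeeping the Sasaki-metric geometry of $G_m(TN)$ correctly: one must identify the Levi-Civita connection of $\tilde g_t$ in the vertical and mixed directions, control the O'Neill tensors of the submersion $\pi:(G_m(TN),\tilde g_t)\to(N,g_t)$, and make sure that the vertical projection $(\,\cdot\,)^v$ commutes appropriately with the covariant derivatives appearing in $\tau(\gamma_{F_t})$. A secondary subtlety is that the frames chosen at $(p,t_0)$ are only instantaneously parallel, so time derivatives of connection coefficients and of the frames themselves must be tracked when differentiating in $t$; here the evolution equations \eqref{rf1}–\eqref{rmcf1} must be used in the form of their induced effect on the Levi-Civita connection (the standard variation formula $\partial_t\Gamma = -\tfrac12 g^{-1}(\nabla\,\partial_t g)$ contracted appropriately). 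Once these connection-variation formulas are in hand, the remaining work is the algebraic collapse of curvature terms, which I would organize so that the decomposition $R=\text{(sectional/Weyl part)}+\text{(Ricci part)}$ is never needed — only the raw $R(w,\nu_i)\nu_i$ combination, so that the final answer is manifestly $\mathcal{R}(g_t)$ as defined. The codimension-one corollary then follows because $m=1$ forces $\mathop{\mathrm{Hom}}(W,W^\bot)$ to be one-dimensional and $\mathcal{R}(g_t)$ to reduce to a multiple of the shape operator's normal direction, though that reduction is not needed for Theorem \ref{main1} itself.
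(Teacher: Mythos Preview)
Your outline is correct and follows essentially the same route as the paper: compute $(\partial_t\gamma_{F_t})^v$ by differentiating the normal frame (the paper packages this as Proposition~\ref{varGauss}, using Uhlenbeck-type time-dependent orthonormal frames rather than the connection-variation formula you anticipate), compute $\tau(\gamma_{F_t})^v$ via the Sasaki connection and the Codazzi equation (Proposition~\ref{tensofGauss}), and then match using \eqref{rf1}--\eqref{rmcf1} so that the $\nabla^N H$ terms cancel and the difference between the full $\mathrm{Ric}(\nu_j,\bar e_k)$ from $\partial_t g_t$ and the tangential trace $\sum_i R(\bar e_i,\nu_j,\bar e_i,\bar e_k)$ from $\tau(\gamma)^v$ leaves exactly the normal-frame trace $\mathcal{R}(g_t)$. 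Your expected difficulties with O'Neill tensors and $\partial_t\Gamma$ are largely bypassed in the paper by the explicit Levi--Civita formula (Proposition~\ref{LC1}) and the frame trick of Section~\ref{VVG}, so the bookkeeping is lighter than you anticipate.
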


If $\mathcal{R}(g_{t})$ is identically zero, then the equation (\ref{general}) is reduced to the vertically harmonic map heat flow equation (\ref{vhf}). 
The typical case is the codimension 1 case. 
When $m=1$, then $ G_{1}(TN)$ is equivalent to $\mathbb{P}(TN)$ and $\mathcal{R}(g_{t})\equiv 0$ by definition since $W$ in $\mathbb{P}(TN)$ is a 1 dimensional subspace. 
Hence, as a direct corollary from Theorem \ref{main1}, we have the following. 
 
\begin{corollary}\label{cor1}
Let $(N,g_{t})$, $f_{t}:N\to \mathbb{R}$ and $F_{t}:M\to N$ be as in Theorem \ref{main1}. 
Assume the codimension of $M$ is $1$. 
Then the Gauss map $\gamma_{F_{t}}:M \to \mathbb{P}(TN)$ is a vertically harmonic map heat flow, that is, it satisfies
\[\left(\frac{\partial \gamma_{F_{t}}}{\partial t}\right)^{v}=\tau(\gamma_{F_{t}})^{v}.\]
\end{corollary}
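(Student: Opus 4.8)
The plan is to obtain the corollary directly from Theorem \ref{main1} by verifying that the extra curvature term $\mathcal{R}(g_{t})\circ\gamma_{F_{t}}$ in (\ref{general}) vanishes identically in codimension one. First I would note that the hypothesis on the codimension means $m:=n-\ell=1$, so the Grassmann bundle $G_{m}(TN)=G_{1}(TN)$ is exactly the projectivized tangent bundle $\mathbb{P}(TN)$, and hence $\gamma_{F_{t}}$ is a map $M\to\mathbb{P}(TN)$ to which Definition \ref{vhmhf} applies.

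Next I would show $\mathcal{R}(g_{t})\equiv 0$ as a section of $\mathcal{V}$ over $\mathbb{P}(TN)$. Fix $W\in G_{1}(TN)$. Since $\dim W=1$, an orthonormal basis of $W$ consists of a single unit vector $\nu_{1}$, and every $w\in W$ has the form $w=c\,\nu_{1}$ for some $c\in\mathbb{R}$. Then, by Definition \ref{RgF},
\[
\bigl((\mathcal{R}(g_{t}))(W)\bigr)(w)=\bigl(R(w,\nu_{1})\nu_{1}\bigr)_{W^{\bot}}=c\,\bigl(R(\nu_{1},\nu_{1})\nu_{1}\bigr)_{W^{\bot}}=0,
\]
the last equality being the skew-symmetry $R(X,X)=0$ of the Riemannian curvature operator in its first two arguments. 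Thus $(\mathcal{R}(g_{t}))(W)=0$ for every $W$, so $\mathcal{R}(g_{t})\circ\gamma_{F_{t}}=0$.

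Substituting this into (\ref{general}) of Theorem \ref{main1} leaves
\[
\left(\frac{\partial\gamma_{F_{t}}}{\partial t}\right)^{v}=\tau(\gamma_{F_{t}})^{v}
\]
with respect to the induced metrics $h_{t}:=F_{t}^{*}g_{t}$ on $M$ and the Sasaki metrics $\tilde{g}_{t}$ on $\mathbb{P}(TN)$; by Definition \ref{vhmhf} this says precisely that $\bigl((M,h_{t}),\gamma_{F_{t}}\bigr)$ is a vertically harmonic map heat flow, which is the assertion. I expect essentially no obstacle here: the corollary is a genuine consequence of Theorem \ref{main1}, and the only point needing (minimal) care is the vanishing of $\mathcal{R}(g_{t})$, which follows at once from the antisymmetry of the curvature tensor together with the observation that a one-dimensional subspace cannot contain two linearly independent vectors.
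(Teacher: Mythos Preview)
Your proof is correct and matches the paper's argument essentially verbatim: the paper likewise observes that $G_{1}(TN)=\mathbb{P}(TN)$ and that $\mathcal{R}(g_{t})\equiv 0$ because $W$ is one-dimensional, then invokes Theorem~\ref{main1}. Your explicit appeal to the skew-symmetry $R(\nu_{1},\nu_{1})=0$ is the (only) detail the paper leaves implicit.
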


\begin{remark}
When $f_{t}\equiv 0$, the equation (\ref{rf1}) is equivalent to $\partial_{t} g_{t}=-\mathop{\mathrm{Ric}}(g_{t})$. 
This is very similar to the Ricci flow equation $\partial_{t} g_{t}=-2\mathop{\mathrm{Ric}}(g_{t})$ but the coefficient of $\mathop{\mathrm{Ric}}(g_{t})$ is different. 
When $f_{t}\equiv \alpha$ (constant) and $g_{t}$ is K\"ahler metric, the equation (\ref{rf1}) is equivalent to the normalized K\"ahler Ricci flow equation. 
\end{remark}

The reason why we take vertical parts in (\ref{vhf}) is the following. 
When the ambient space is $\mathbb{R}^{n}$, for an immersion $F:M^{n-m}\to\mathbb{R}^{n}$, 
the Gauss map is defined as a subspace obtained by translating a normal space at $F(p)$ so that it passes through the Origin for each point $p$ in $M$. 
Only in this paragraph, we denote it by $\bar{\gamma}_{F}:M\to G_{m}(\mathbb{R}^{n})$ to distinguish it from Definition \ref{Gauss00}. 
Then, $\gamma_{F}:M\to G_{m}(T\mathbb{R}^{n})$ by Definition \ref{Gauss00} and the ordinary definition $\bar{\gamma}_{F}:N\to G_{m}(\mathbb{R}^{n})$ is related as
\[\gamma_{F}(p)=(F(p),\bar{\gamma}_{F}(p))\]
by the natural trivialization $G_{m}(T\mathbb{R}^{n})\cong \mathbb{R}^{n}\times G_{m}(\mathbb{R}^{n})$. 
Under this notation, (\ref{RV}) and $(\ref{harm})$ are rewritten as $\tau(\bar{\gamma}_{F})=\nabla^{N}H(F)$ and $\partial_{t}\bar{\gamma}_{F_{t}}=\tau(\bar{\gamma}_{F_{t}})$. 
Namely, these are just equations in the fibers, that is, in $G_{m}(\mathbb{R}^{n})$. 
Hence, observing only the vertical component as Definition \ref{vhmhf} is natural to generalize the situations when the ambient space is $\mathbb{R}^{n}$. 
The idea taking the vertical component was also appeared in the paper of Wood \cite{Wood} in a similar situation. 
Actually, since $(\partial_{t}\gamma_{F_{t}})^{v}=\partial_{t}\bar{\gamma}_{F_{t}}$ and $\tau(\gamma_{t})^{v}=\tau(\bar{\gamma}_{t})$, we have the following. 

\begin{remark}
Taking $(N,g_{t})\equiv (\mathbb{R}^{n},dx^2)$ and $f_{t}\equiv 0$, 
Theorem \ref{main1} implies Theorem~A in \cite{Wang}, since $\mathcal{R}(dx^2)\equiv 0$. 
\end{remark}

Once we have proved that the Gauss maps satisfies the vertically harmonic map heat flow equation, 
one obtain a subsolution of some kind of parabolic equation by a similar way of Corollary A of \cite{Wang}. 
In the remainder of this introduction, we explain that. 
Let $(N,g_{t})$, $f_{t}:N\to \mathbb{R}$ and $F_{t}:M\to N$ be given as in Theorem \ref{main1}, 
and assume that $\mathop{\mathrm{codim}} M=1$. 
Then, its Gauss map $\gamma_{F_{t}}:M\to \mathbb{P}(TN)$ satisfies (\ref{vhf}). 
Let $\rho:\mathbb{P}(TN)\to\mathbb{R}$ be a smooth function which satisfies 
\begin{align}\label{horicon}
(\nabla^{t}\rho)_{\mathcal{H}_{t}}\equiv0
\end{align}
for all time $t\in[0,T)$, 
where $\nabla^{t}\rho$ is the gradient of $\rho$ with respect to the Sasaki metric $\tilde{g}_{t}$ on $\mathbb{P}(TN)$ 
and $(\nabla^{t}\rho)_{\mathcal{H}_{t}}$ is the $\mathcal{H}_{t}$-part of $\nabla^{t}\rho$. 
This condition means that $\rho$ is {\it horizontally constant} for all time. 
Assume that there exists a constant $C\geq 0$ such that 
\begin{align}\label{hessC}
\mathop{\mathrm{Hess}_{t}}\rho\geq-C\tilde{g}_{t}
\end{align}
for all $t\in[0,T)$. 
It is clear that we can take such $C$ when $N$ is compact and the flows can be extended on $[0, T+\epsilon)$ for some $\epsilon>0$. 
Then, our second main theorem is the following. 

\begin{theorem}\label{main2}
The pull-back of $\rho$ by the Gauss map, $\rho\circ \gamma_{F_{t}}:M\to\mathbb{R}$, satisfies 
\begin{align}\label{subsol}
\left(\frac{\partial}{\partial t} -\Delta_{F_{t}^{*}g_{t}}\right)(\rho\circ\gamma_{F_{t}}) \leq C\left((n-1)+|A(F_{t})|^2\right),
\end{align}
where $\Delta_{F_{t}^{*}g_{t}}$ is the Laplace operator of the induced metric $F_{t}^{*}g_{t}$ on $M$ and 
$|A(F_{t})|$ is the norm of the second fundamental form of $F_{t}:M\to N$ with respect to the metric $g_{t}$ on $N$. 
\end{theorem}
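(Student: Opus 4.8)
The plan is to reduce the inequality \eqref{subsol} to a combination of the vertically harmonic map heat flow equation \eqref{vhf}, the composition formula for the Laplacian of $\rho\circ\gamma_{F_{t}}$, and the curvature bound \eqref{hessC}, exploiting heavily the horizontal-constancy hypothesis \eqref{horicon}. First I would compute $\partial_{t}(\rho\circ\gamma_{F_{t}})=\tilde g_{t}\bigl(\nabla^{t}\rho, \partial_{t}\gamma_{F_{t}}\bigr)$; since $\nabla^{t}\rho$ is purely vertical by \eqref{horicon}, only the vertical part of $\partial_{t}\gamma_{F_{t}}$ survives, so this equals $\tilde g_{t}\bigl(\nabla^{t}\rho,(\partial_{t}\gamma_{F_{t}})^{v}\bigr)$. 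By Corollary~\ref{cor1} the codimension-one Gauss map is a vertically harmonic map heat flow, hence $(\partial_{t}\gamma_{F_{t}})^{v}=\tau(\gamma_{F_{t}})^{v}$, so $\partial_{t}(\rho\circ\gamma_{F_{t}})=\tilde g_{t}\bigl(\nabla^{t}\rho,\tau(\gamma_{F_{t}})^{v}\bigr)=\tilde g_{t}\bigl(\nabla^{t}\rho,\tau(\gamma_{F_{t}})\bigr)$, again using verticality of $\nabla^{t}\rho$.

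Next I would apply the standard second-order chain rule for a map composed with a function: for $\gamma=\gamma_{F_{t}}:(M,F_{t}^{*}g_{t})\to (\mathbb{P}(TN),\tilde g_{t})$,
\[
\Delta_{F_{t}^{*}g_{t}}(\rho\circ\gamma)=\bigl(\mathrm{Hess}_{t}\,\rho\bigr)\bigl(d\gamma,d\gamma\bigr)+\tilde g_{t}\bigl(\nabla^{t}\rho,\tau(\gamma)\bigr),
\]
where the first term is the trace over an $F_{t}^{*}g_{t}$-orthonormal frame $\{e_{a}\}_{a=1}^{n-1}$ of $\mathrm{Hess}_{t}\rho\,(\gamma_{*}e_{a},\gamma_{*}e_{a})$. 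Subtracting, the two $\tilde g_{t}(\nabla^{t}\rho,\tau(\gamma))$ terms cancel, giving the exact identity
\[
\left(\frac{\partial}{\partial t}-\Delta_{F_{t}^{*}g_{t}}\right)(\rho\circ\gamma_{F_{t}})=-\sum_{a=1}^{n-1}\mathrm{Hess}_{t}\,\rho\,(\gamma_{*}e_{a},\gamma_{*}e_{a}).
\]
Now \eqref{hessC} gives $\mathrm{Hess}_{t}\,\rho\ge -C\tilde g_{t}$, so the right side is bounded above by $C\sum_{a=1}^{n-1}\tilde g_{t}(\gamma_{*}e_{a},\gamma_{*}e_{a})=C\,|d\gamma_{F_{t}}|^{2}$, the energy density of the Gauss map with respect to $F_{t}^{*}g_{t}$ and $\tilde g_{t}$.

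The remaining, and genuinely computational, step is to show $|d\gamma_{F_{t}}|^{2}\le (n-1)+|A(F_{t})|^{2}$. Using the Sasaki metric, $\gamma_{*}e_{a}$ decomposes into a horizontal part — which via the Riemannian submersion $\pi$ is identified with $F_{t*}e_{a}$, contributing $\sum_a |F_{t*}e_a|^2 = n-1$ — and a vertical part, which is exactly the covariant derivative of the Gauss map in the fiber direction; for a hypersurface with unit normal $\nu$ this vertical derivative is governed by the shape operator, namely $(\nabla_{e_a}\nu)^{\top}=-A(F_t)e_a$ up to identifications, so $\sum_a |(\gamma_{*}e_a)^{v}|_{\tilde g_t}^{2}=\sum_a|A(F_t)e_a|^2=|A(F_t)|^2$. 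Adding the two pieces yields $|d\gamma_{F_{t}}|^{2}=(n-1)+|A(F_{t})|^{2}$, and plugging into the previous display gives \eqref{subsol}. The main obstacle is keeping the Sasaki-metric bookkeeping straight in this last step — correctly identifying the vertical component of $d\gamma_{F_t}$ with the second fundamental form under the canonical isomorphism $\mathcal{V}_{W}\cong\mathrm{Hom}(W,W^{\bot})$, and checking that the horizontal component genuinely contributes the flat term $n-1$ — but this is exactly the kind of identification already set up in Section~\ref{RGGB}, so it should go through without surprises; the only subtlety is that all inner products and the Hessian must be taken with respect to the time-dependent metrics $F_{t}^{*}g_{t}$ and $\tilde g_{t}$ simultaneously, which is consistent because $\pi$ is a Riemannian submersion from $(\mathbb{P}(TN),\tilde g_{t})$ to $(N,g_{t})$ for every $t$.
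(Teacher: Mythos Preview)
Your proposal is correct and follows essentially the same route as the paper: compute $\partial_t(\rho\circ\gamma_{F_t})$ and $\Delta(\rho\circ\gamma_{F_t})$ via the chain rule and the composition formula, use the horizontal-constancy hypothesis \eqref{horicon} together with Corollary~\ref{cor1} to cancel the $\tilde g_t(\nabla^t\rho,\tau(\gamma_{F_t}))$ terms, apply the Hessian bound \eqref{hessC}, and then evaluate $|d\gamma_{F_t}|^2=(n-1)+|A(F_t)|^2$ from the horizontal/vertical decomposition \eqref{dgamma}. The only cosmetic difference is the order in which you invoke Corollary~\ref{cor1} versus subtracting the two chain-rule identities; the paper also states the energy-density identity as an equality from the outset (as you ultimately do) rather than an inequality.
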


\noindent {\bf Relation to other papers.} 
The pair of equations (\ref{rf1}) and (\ref{rmcf1}) can be considered as a coupled equation of a Ricci flow (although the coefficient is different by $-2$) and a mean curvature flow. 
Recently, studies of coupled equations have been spread. 
For instance, Ricci-mean curvature flows are appeared in \cite{HanLi}, \cite{LotayPacini} and \cite{Yamamoto2}, and Ricci-harmonic map heat flows are appeared in \cite{Muller}. 
Recently, Ramos and Ripoll \cite{RamosRipoll} have proved a result similar to Ruh-Vilms' theorem 
for immersions of codimension one into symmetric spaces, where they defined the Gauss map as a map into a sphere 
in the Lie algebra of the isometry group of the symmetric space. 

\vspace{3mm}
\noindent {\bf Organization of this paper.} 
The organization of this paper is as follows. 
In Section \ref{RGGB}, we review Riemannian geometry of Grassmann bundle, define the Sssaki metric and write down its Levi-Civita connection explicitly. 
In Section \ref{TFG}, we compute the tension field of the Gauss map. 
In Section \ref{VVG}, we calculate the variational vector field of Gauss maps associated with 1-parameter families of immersions and metrics. 
In Section \ref{GCF}, we prove Theorem \ref{main1}. 
In Section \ref{Asub}, we prove Theorem \ref{main2}. 
In Appendix A, we give a natural question related to the equation (\ref{general}). 
\section{Riemannian geometry of Grassmann bundle}\label{RGGB}
A Grassmann bundle over a Riemannian manifold admits a natural Riemannian structure. 
In this section, we review some of standard facts on the Grassmann bundle as a Riemannian manifold. 
Throughout this paper, we often use expressions in this section. 

Let $(N,g)$ be an $n$-dimensional Riemannian manifold and $\pi:G_{m}(TN)\to N$ be a Grassmann bundle over $N$ for a fixed integer $0\leq m \leq n$. 
Namely, the fiber over $p$ in $N$ consists of all $m$-dimensional subspaces in $T_{p}N$. 
We introduce good coordinates on $G_{m}(TN)$ as follows. 
Fix a point $V_{0}$ in $G_{m}(TN)$ and put $p_{0}:=\pi(V_{0})$. 
Let $(U,\varphi=(x^{1},\dots,x^{n}))$ be normal coordinates of $(N,g)$ centered at $p_{0}$. 
Let $v_{1},\dots,v_{m}$ be an orthonormal basis of $V_{0}$ and $w_{m+1},\dots,w_{n}$ be an orthonormal basis of $V_{0}^{\bot}$. 
We extend $v_{i}$, $w_{j}$ on $U$ as parallel transports with respect to $\nabla$ along geodesics from $p_{0}$, and continue to denote them by $v_{i}$, $w_{j}$. 
Define a map $\Gamma:\varphi(U)\times M(m,n-m)\to G_{m}(TN)$ by assigning 
\begin{align}\label{coords}
\Gamma(x,a):=\mathop{\mathrm{Span}}\left\{\,v_{i}(\varphi^{-1}(x))+\sum_{\alpha=m+1}^{n}a_{i}^{\alpha}w_{\alpha}(\varphi^{-1}(x))\,\,\Bigg|\,\, i=1,\dots,m  \,\right\}. 
\end{align}
to each $(x=(x^{A}), a=(a_{i}^{\alpha}))$, where $M(m,n-m)$ denotes the space of all $(m,n-m)$-matrices. 
Then, $\Gamma$ gives local coordinates on $G_{m}(TN)$ defined on a sufficiently small neighborhood of $(0,O)$ in $\varphi(U)\times M(m,n-m)$ around $V_{0}$. 

First, we define the vertical distribution $\mathcal{V}$ over $G_{m}(TN)$ and a natural fiber metric $k$ on $\mathcal{V}$. 
For a point $V$ in $G_{m}(TN)$, we define the {\it vertical subspace} of $T_{V}G(TN)$ by 
the kernel of the linear map $\pi_{*V}:T_{V}G_{m}(TN)\to T_{\pi(V)}N$ and denote it by $\mathcal{V}_{V}$. 
Put $p=\pi(V)$. 
It is clear that $\mathcal{V}_{V}$ is the tangent space at $V$ of $G_{m}(T_{p}N)$, the fiber over $p$. 
Hence, there is the natural identification of $\mathcal{V}_{V}$ with $\mathop{\mathrm{Hom}}(V,V^{\bot})$ by 
\[\frac{d}{ds}\bigg|_{s=0}V(s) \quad \longmapsto \quad \sum_{i=1}^{m}v^{*}_{i}(0)\otimes \left(\frac{d}{ds}\bigg|_{s=0}v_{i}(s)\right)_{V^{\bot}}, \]
where $V(s)$ is a curve in $G_{m}(T_{p}N)$ through $V$ at $s=0$ and $v_{1}(s),\dots, v_{m}(s)$ is a basis of $V(s)$. 
Furthermore,  $(\ast)_{V^{\bot}}$ denotes the $V^{\bot}$-part of $\ast$ with respect to the orthogonal decomposition $T_{p}N=V\oplus V^{\bot}$ by the Riemannian metric $g$. 
One can easily check that this correspondence is canonical, that is, it does not depend on the second derivative of $V(s)$ and the choice of basis of $V(s)$. 
Via this identification, we define the natural inner product $k_{V}$ on $\mathcal{V}_{V}$ so that it makes 
\[\{\, v_{i}^{*}\otimes w_{\alpha}\mid 1\leq i \leq m,\, m+1\leq \alpha \leq n\,\}\]
an orthonormal basis of $\mathcal{V}_{V}\cong \mathop{\mathrm{Hom}}(V,V^{\bot})$ for an orthonormal basis $v_{1},\dots, v_{m}$ of $V$ and an orthonormal basis $w_{m+1},\dots, w_{n}$ of $V^{\bot}$. 
The {\it vertical distribution} on $G(TN)$ is defined by 
\[\mathcal{V}:=\bigcup_{V\in G_{m}(TN)}\mathcal{V}_{V}\]
and a fiber metric $k$ on $\mathcal{V}$ is defined by $k:V\mapsto k_{V}$ for each $V\in G_{m}(TN)$. 

Next, we define the horizontal distribution $\mathcal{H}$ over $G_{m}(TN)$. 
We define the horizontal lift of $u\in T_{p}N$ to $V\in \pi^{-1}(p)$ as follows. 
Take a basis $v_{1},\dots, v_{m}$ of $V$ and a curve $c:(-\epsilon,\epsilon)\to N$ through $p$ at $s=0$ so that $\frac{d}{ds}|_{s=0}c(s)=u$. 
Let $v_{i}(s)$ be the parallel transport of $v_{i}$ along $c$ with respect to the Levi--Civita connection of $(N,g)$. 
Then, we define the {\it horizontal lift} of $u$ at $V$ by 
\[[u]^{h}_{V}:=\frac{d}{ds}\bigg|_{s=0}\mathop{\mathrm{Span}}\{\, v_{1}(s),\dots,v_{m}(s)\,\}\in T_{V}G_{m}(TN). \]
We sometimes omit the subscript $V$ if it can be recovered from the context. 
One can easily see that the correspondence $T_{p}N\ni u \mapsto [u]^{h}_{V} \in T_{V}G_{m}(TN)$ is linear and injective. 
Thus, its image defines an $n$-dimensional subspace of $T_{V}G_{m}(TN)$ and we denote it by $\mathcal{H}_{V}$ and call it the {\it horizontal subspace} of $T_{V}G_{m}(TN)$. 
The {\it horizontal distribution} $\mathcal{H}$ on $G_{m}(TN)$ is defined by 
\[\mathcal{H}:=\bigcup_{V\in G_{m}(TN)}\mathcal{H}_{V}. \]

Then, the tangent bundle of $G_{m}(TN)$ is decomposed as 
\begin{align}\label{HV}
TG_{m}(TN)=\mathcal{H}\oplus\mathcal{V}. 
\end{align}
This decomposition is explicitly described as follows. 
Fix a point $V\in G_{m}(TN)$ and a tangent vector $X\in T_{V}G_{m}(TN)$, and put $p:=\pi(V)\in N$. 
Let $V:(-\epsilon,\epsilon)\to G_{m}(TN)$ be a curve through $V$ at $s=0$ so that $\frac{d}{ds}|_{s=0}V(s)=X$. 
Then the curve $c(s):=\pi(V(s))$ in $N$ defines a tangent vector $u:=\frac{d}{ds}|_{s=0}c(s)$ at $p$. 
It is clear that $u=\pi_{*}(X)$. 
Take a basis $v_{1}(s),\dots,v_{m}(s)$ of $V(s)$. 
Then, $X$ decomposes as 
\begin{align}\label{dec1}
X=[u]^{h}_{V}+\sum_{i=1}^{m}v_{i}^{*}(0)\otimes\left(\nabla_{\frac{d}{ds}\big|_{s=0}}v_{i}(s)\right)_{V^{\bot}}. 
\end{align}
The first term is the horizontal part of $X$ and the second term is the vertical part of $X$. 
We denote $\pi_{*}(X)$ by $\hat{X}$ and the vertical part of $X$ by $X^{v}$. 
Under these notations, we have $X=[\hat{X}]^{h}+X^{v}$. 
Then, a Riemannian metric on $G_{m}(TN)$ is defined so that the decomposition (\ref{HV}) becomes an orthogonal decomposition. 

\begin{definition}\label{Sasaki}
Let $(N,g)$ be an $n$-dimensional Riemannian manifold, and $\pi:G_{m}(TN)\to N$ be a Grassmann bundle over $N$. 
We define a Riemannian metric $\tilde{g}$ on $G_{m}(TN)$ by 
\[\tilde{g}(X,Y):=g(\hat{X},\hat{Y})+k(X^{v},Y^{v}), \]
and call it a {\it Sasaki metric}. 
\end{definition}

It is clear that, with respect to the Sasaki metric $\tilde{g}$, the decomposition (\ref{HV}) is orthogonal, 
and $\pi_{*}|_{\mathcal{H}_{V}}:\mathcal{H}_{V}\to T_{\pi(V)}N$ is linear isometry for each $V\in G_{m}(TN)$. 
Thus, $\pi:(G_{m}(TN),\tilde{g})\to (N,g)$ is a Riemannian submersion. 
Actually, it is well-known that each fiber is totally geodesic. 
We denote the Levi--Civita connection on $(G_{m}(TN),\tilde{g})$ by $\tilde{\nabla}$. 
The remainder of this section is devoted to see the explicit formula for $\tilde{\nabla}$. 

Here we prepare some notations. 
Denote by $\nabla$ and $R$ the Levi--Civita connection and the curvature tensor of $(N,g)$, respectively. 
Our definition of $R$ obeys 
$R(\xi_{1},\xi_{2})\xi_{3}:=\mathop{(\nabla_{\xi_{1}}\nabla_{\xi_{2}}-\nabla_{\xi_{2}}\nabla_{\xi_{1}}-\nabla_{[\xi_{1},\xi_{2}]})}\xi_{3}$ 
for tangent vector $\xi_{i}$ ($i=1,2,3$) on $N$. 
Next, for a point $V$ in $G_{m}(TN)$ and tangent vectors $\xi_{1}$, $\xi_{2}$ of $T_{\pi(V)}N$, 
an element $R^{\bot}(\xi_{1},\xi_{2})$ in $\mathop{\mathrm{Hom}}(V,V^{\bot})$ is defined by
\[(R^{\bot}(\xi_{1},\xi_{2}))(v):=(R(\xi_{1},\xi_{2})v)_{V^{\bot}}\]
for $v \in V$. 
Let $X$, $Y$ be vector fields on $G_{m}(TN)$. 
By the projection $\pi:G_{m}(TN)\to N$, we get the pull-back bundle $\pi^{*}(TN)$ over $G_{m}(TN)$, 
and we define sections of $\pi^{*}(TN)$ by 
\[\hat{X}(V):=\pi_{*}(X(V))\quad\text{and}\quad\hat{Y}(V):=\pi_{*}(Y(V))\]
for $V\in G_{m}(TN)$, respectively. 
Then, for each section $\xi$ of $\pi^{*}(TN)$, by taking the inner product of $R^{\bot}(\hat{X}(V),\xi(V))$ and $Y^{v}(V)$ 
as elements in $\mathop{\mathrm{Hom}}(V,V^{\bot})$ with respect to $k_{V}$, 
we get a function $k(R^{\bot}(\hat{X},\xi),Y^{v})$ over $G_{m}(TN)$. 
Thus, the correspondence $\xi\mapsto k(R^{\bot}(\hat{X},\xi),Y^{v})$ is a section of the dual bundle $(\pi^{*}(TN))^{*}$ of $\pi^{*}(TN)$, 
and we denote its metric dual, a section of $\pi^{*}(TN)$, by $k(R^{\bot}(\hat{X},\bullet),Y^{v})^{\flat}$. 
Finally, $\nabla^{\bot}_{X}Y^{v}$ is defined as an element in $\mathop{\mathrm{Hom}}(V,V^{\bot})$ by 
\begin{align}\label{normalnabla}
(\nabla^{\bot}_{X}Y^{v})(v_{i}(0)):=\left(\nabla_{\frac{\partial}{\partial s}|_{s=0}}(Y^{v}(v_{i}(s)))\right)_{V^{\bot}}-Y^{v}\left(\left(\nabla_{\frac{\partial}{\partial s}|_{s=0}}v_{i}(s)\right)_{V} \right), 
\end{align}
where $v_{1}(s),\dots,v_{m}(s)$ is a basis of a curve $V(s)$ in $G_{m}(TN)$ such that $V(0)=V$ and $\frac{d}{ds}|_{s=0}V(s)=X(V)$. 

\begin{proposition}\label{LC1}
The Levi--Civita connection $\tilde{\nabla}$ of the Sasaki metric $\tilde{g}$ is given by
\begin{align}\label{LC2}
\begin{aligned}
\tilde{\nabla}_{X}Y:=&\biggl[\nabla_{X}\hat{Y}+\frac{1}{2}k(R^{\bot}(\hat{X},\bullet),Y^{v})^{\flat}+\frac{1}{2}k(R^{\bot}(\hat{Y},\bullet),X^{v})^{\flat}\biggr]^{h}\\
&-\frac{1}{2}R^{\bot}(\hat{X},\hat{Y})+\nabla^{\bot}_{X}Y^{v}
\end{aligned}
\end{align}
for any vector fields $X$ and $Y$ on $G_{m}(TN)$, 
where $\nabla_{X}\hat{Y}$ means the covariant derivative of $\hat{Y}$ as a section of $\pi^{*}(TN)$ by $X\in TG_{m}(TN)$ with respect to 
the induced connection from the Levi--Civita connection $\nabla$ of $g$. 
\end{proposition}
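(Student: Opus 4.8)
The plan is to deduce (\ref{LC2}) from the uniqueness in the fundamental theorem of Riemannian geometry. Write $D_{X}Y$ for the right-hand side of (\ref{LC2}). First I would check that $D$ is an affine connection: it is manifestly tensorial in $X$ (each of the five terms is, since $\hat X=\pi_{*}X$, $X^{v}$, and $\nabla^{\bot}_{X}$ depend linearly and pointwise on $X$), and for $Y\mapsto fY$ the Leibniz defects of the five terms reassemble, using $Y=[\hat Y]^{h}+Y^{v}$, into $(Xf)\,Y$. It then remains to verify that $D$ is torsion-free, $D_{X}Y-D_{Y}X=[X,Y]$, and compatible with $\tilde g$, $X\,\tilde g(Y,Z)=\tilde g(D_{X}Y,Z)+\tilde g(Y,D_{X}Z)$. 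Both are tensorial (resp.\ derivation) identities, hence local and checkable on a local frame; to verify them at a given point $V_{0}$ I would use the chart $\Gamma$ of (\ref{coords}) centered at $V_{0}$ with $\varphi$ normal coordinates on $N$ at $p_{0}=\pi(V_{0})$, so that at $p_{0}$ the metric $g$ is Euclidean to first order and the parallel frames $v_{i},w_{\alpha}$ have vanishing ordinary derivatives. As a frame of $TG_{m}(TN)$ I take the vertical coordinate fields $\partial/\partial a_{i}^{\alpha}$ (which span $\mathcal V$ and commute among themselves) together with the horizontal lifts of the $\partial/\partial x^{A}$; invariantly, this amounts to testing on horizontal lifts $\tilde\xi:=[\xi]^{h}$ and on vertical fields.

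The computation rests on four ingredients I would record first. (i) By Definition \ref{Sasaki}, $\tilde g$ is block-diagonal on such a frame: $\tilde g(\tilde\xi,\tilde\eta)=g(\xi,\eta)\circ\pi$, $\tilde g(\tilde\xi,\Phi)=0$ for $\Phi$ vertical, and $\tilde g(\Phi,\Psi)=k(\Phi,\Psi)$. (ii) The fibre metric $k$ is parallel for $\nabla^{\bot}$: parallel transport along a curve in $N$ preserves the splitting $T_{p}N=V\oplus V^{\bot}$, hence sends $k$-orthonormal frames of $\mathrm{Hom}(V,V^{\bot})$ to $k$-orthonormal frames, so $X\,k(\Phi,\Psi)=k(\nabla^{\bot}_{X}\Phi,\Psi)+k(\Phi,\nabla^{\bot}_{X}\Psi)$; moreover, restricted to vertical directions $\nabla^{\bot}$ is torsion-free (directly from (\ref{normalnabla}), where the relevant curve stays in one fibre). (iii) Bracket relations: $[\Phi,\Psi]$ is vertical and coincides with $\nabla^{\bot}_{\Phi}\Psi-\nabla^{\bot}_{\Psi}\Phi$ by (ii); $[\tilde\xi,\Phi]$ is vertical (it projects to $[\xi,0]=0$) and equals $\nabla^{\bot}_{\tilde\xi}\Phi$; and the key identity $[\tilde\xi,\tilde\eta]=\widetilde{[\xi,\eta]}-R^{\bot}(\xi,\eta)$, the vertical defect being the curvature of the Ehresmann connection $\mathcal H$, which one reads off from the definition of $[\cdot]^{h}$ via parallel frames and $R(\xi,\eta)=\nabla_{\xi}\nabla_{\eta}-\nabla_{\eta}\nabla_{\xi}-\nabla_{[\xi,\eta]}$. (iv) For $X$ vertical and $Y=\tilde\eta$ basic, $\nabla_{X}\hat Y=\nabla_{\pi_{*}X}\eta=0$.

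Granting these, the verification is a short finite case-check that exhibits the role of each term of (\ref{LC2}). For torsion: in the horizontal--horizontal case the horizontal parts give $\nabla_{\xi}\eta-\nabla_{\eta}\xi=[\xi,\eta]$, the symmetric combination $\tfrac12 k(R^{\bot}(\hat X,\bullet),Y^{v})^{\flat}+\tfrac12 k(R^{\bot}(\hat Y,\bullet),X^{v})^{\flat}$ cancels on antisymmetrisation, and the vertical parts give $-\tfrac12 R^{\bot}(\xi,\eta)-\bigl(-\tfrac12 R^{\bot}(\eta,\xi)\bigr)=-R^{\bot}(\xi,\eta)$ using $R^{\bot}(\eta,\xi)=-R^{\bot}(\xi,\eta)$, matching (iii); the horizontal--vertical case reduces, using (iv), to $D_{\tilde\xi}\Phi-D_{\Phi}\tilde\xi=\nabla^{\bot}_{\tilde\xi}\Phi=[\tilde\xi,\Phi]$, and the vertical--vertical case to (ii). For metric-compatibility: differentiating the three block-diagonal identities of (i), the cases with all three fields of one type are immediate (using (ii) for the vertical one), and in the mixed cases the content is that, e.g., $\tilde\xi\bigl(\tilde g(\tilde\eta,\Phi)\bigr)=\tilde\xi(0)=0$ is matched because $\tilde g(D_{\tilde\xi}\tilde\eta,\Phi)=-\tfrac12 k(R^{\bot}(\xi,\eta),\Phi)$ (this picks out the $-\tfrac12 R^{\bot}(\hat X,\hat Y)$ term) and $\tilde g(\tilde\eta,D_{\tilde\xi}\Phi)=\tfrac12 k(R^{\bot}(\xi,\eta),\Phi)$ (this picks out the $\tfrac12 k(R^{\bot}(\hat X,\bullet),Y^{v})^{\flat}$ term) are negatives of one another --- which is precisely why those two ``cross'' terms carry the coefficients $-\tfrac12$ and $+\tfrac12$.

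I expect the main obstacle to be ingredient (iii): deriving the curvature of the Ehresmann connection $\mathcal H$ cleanly and tracking vertical vectors along curves through the identification $\mathcal V_{V}\cong\mathrm{Hom}(V,V^{\bot})$, together with the ancillary care needed to see that $\nabla^{\bot}$ in (\ref{normalnabla}) is well defined (independent of the chosen basis and curve) and $k$-parallel. Once the sign conventions for $R$, for the horizontal lift $[\cdot]^{h}$, and for $\mathcal V_{V}\cong\mathrm{Hom}(V,V^{\bot})$ are fixed consistently, everything else is a mechanical, if somewhat lengthy, term-by-term match, and the computation closes.
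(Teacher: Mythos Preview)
Your proposal is correct and follows essentially the same strategy as the paper: define the candidate connection, then verify metric compatibility and torsion-freeness to invoke uniqueness of the Levi--Civita connection. The organization differs only in bookkeeping. For metric compatibility the paper does a single slick computation of $\tilde g(\tilde\nabla_X Y,Y)$ for general $X,Y$ rather than a case split; for torsion-freeness the paper reduces (as you do) to the vertical identity $(\nabla^{\bot}_{X}Y^{v}-\nabla^{\bot}_{Y}X^{v})-R^{\bot}(\hat X,\hat Y)=[X,Y]^{v}$, but checks it on the \emph{coordinate} fields $\partial/\partial x^{A}$, $\partial/\partial a_{i}^{\alpha}$ of the chart $\Gamma$ (which commute, so the target bracket vanishes and the content becomes $\nabla_{\hat X}\nabla_{\hat Y}v_i-\nabla_{\hat Y}\nabla_{\hat X}v_i=R(\hat X,\hat Y)v_i$ at $p_0$), whereas you work with \emph{horizontal lifts} $\tilde\xi=[\xi]^{h}$ and invoke the Ehresmann-curvature identity $[\tilde\xi,\tilde\eta]=\widetilde{[\xi,\eta]}-R^{\bot}(\xi,\eta)$ directly. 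These are two equivalent ways of packaging the same curvature computation, and you correctly flag (iii) as the only nontrivial ingredient; once that and the $k$-parallelism of $\nabla^{\bot}$ are in hand, the rest is indeed mechanical.
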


\begin{proof}
Put
\begin{align*}
&\overline{\nabla}_{X}Y:=\biggl[\nabla_{X}\hat{Y}\biggr]^{h}+\nabla^{\bot}_{X}Y^{v}, \\
&\overline{\mathcal{R}}(X,Y):=\biggl[\frac{1}{2}k(R^{\bot}(\hat{X},\bullet),Y^{v})^{\flat}+\frac{1}{2}k(R^{\bot}(\hat{Y},\bullet),X^{v})^{\flat}\biggr]^{h}-\frac{1}{2}R^{\bot}(\hat{X},\hat{Y}). 
\end{align*}
Then, we have $\tilde{\nabla}_{X}Y=\overline{\nabla}_{X}Y+\overline{\mathcal{R}}(X,Y)$. 
It is clear that $\overline{\nabla}$ is a connection on $G_{m}(TN)$ and $\overline{\mathcal{R}}$ is a $(1,2)$-tensor field on $G_{m}(TN)$, 
and hence $\tilde{\nabla}$ is a connection on $G_{m}(TN)$. 

First, we prove that $\tilde{\nabla}\tilde{g}=0$. 
By definitions of $\tilde{g}$ and $\tilde{\nabla}$, we have 
\begin{align*}
\tilde{g}(\tilde{\nabla}_{X}Y,Y)=&g(\nabla_{X}\hat{Y},\hat{Y})+\frac{1}{2}k(R^{\bot}(\hat{X},\hat{Y}),Y^{v})+\frac{1}{2}k(R^{\bot}(\hat{Y},\hat{Y}),X^{v})^{\flat}\\
&-\frac{1}{2}k(R^{\bot}(\hat{X},\hat{Y}),Y^{v})+k(\nabla^{\bot}_{X}Y^{v},Y^{v})\\
=&g(\nabla_{X}\hat{Y},\hat{Y})+k(\nabla^{\bot}_{X}Y^{v},Y^{v}). 
\end{align*}
It is clear that $Xg(\hat{Y},\hat{Y})=2g(\nabla_{X}\hat{Y},\hat{Y})$ since $\nabla$ is the induced connection and it preserves the metric, and 
$Xk(Y^{v},Y^{v})=2k(\nabla^{\bot}_{X}Y^{v},Y^{v})$ since $\nabla^{\bot}_{X}Y^{v}$ is defined so that $k$ is parallel. 
Thus, we have proved 
\[2\tilde{g}(\tilde{\nabla}_{X}Y,Y)=X\tilde{g}(Y,Y), \]
and this means that $\tilde{\nabla}$ preserves $\tilde{g}$. 

Next, we prove that $\tilde{\nabla}$ is torsion-free. Since the horizontal part of $\overline{\mathcal{R}}(X,Y)$ is symmetric and the vertical part is skew-symmetric, we have 
\[\tilde{\nabla}_{X}Y-\tilde{\nabla}_{Y}X=\biggl[\nabla_{X}\hat{Y}-\nabla_{Y}\hat{X}\biggr]^{h}+\biggl(\nabla^{\bot}_{X}Y^{v}-\nabla^{\bot}_{Y}X^{v}\biggr)-R^{\bot}(\hat{X},\hat{Y}). \]
Since $\nabla$ is the induced connection from the Levi--Civita connection, we have
\[\nabla_{X}\hat{Y}-\nabla_{Y}\hat{X}=\widehat{[X,Y]}. \]
Thus, 
\[\biggl[\nabla_{X}\hat{Y}-\nabla_{Y}\hat{X}\biggr]^{h}=\widehat{[X,Y]}^{h}, \]
and the right hand side is just the horizontal part of $[X,Y]$. Thus, it is enough to prove 
\begin{align}\label{v1}
\biggl(\nabla^{\bot}_{X}Y^{v}-\nabla^{\bot}_{Y}X^{v}\biggr)-R^{\bot}(\hat{X},\hat{Y})=[X,Y]^{v}. 
\end{align}
If both $X$ and $Y$ are vertical, that is, these are tangent to $G_{m}(T_{p}N)$ for each $p$ in $N$, then (\ref{v1}) holds, since $\hat{X}=\hat{Y}=0$ 
and $\nabla^{\bot}$ restricted to $G_{m}(T_{p}N)$ is just the Levi--Civita connection of $G_{m}(T_{p}N)$. 
In the other cases, we prove (\ref{v1}) by using good coordinates $(x,a)\mapsto \Gamma(x,a)$ defined by (\ref{coords}) around a fixed  point $V_{0}$ in $G_{m}(TN)$. 
When
\[X=\frac{\partial}{\partial x^{A}}\quad,\quad Y=\frac{\partial}{\partial x^{B}}, \]
it is enough to prove
\begin{align}\label{v2}
\biggl(\nabla^{\bot}_{X}Y^{v}-\nabla^{\bot}_{Y}X^{v}\biggr)-R^{\bot}(\hat{X},\hat{Y})=0
\end{align}
at $V_{0}$. 
Put $V(t,s):=\Gamma(te_{A}+se_{B},0)$, where $e_{1},\dots, e_{n}$ is the standard basis of $\mathbb{R}^{n}$. 
Then we have 
\[\frac{\partial }{\partial t}\bigg|_{t=0}V(t,0)=X(V_{0})\quad\text{and}\quad \frac{\partial }{\partial s}\bigg|_{s=0}V(t,s)=Y(V(t,0)).\] 
Since 
\[v_{i}(t,s):=v_{i}(\varphi^{-1}(te_{A}+se_{B}))\] 
is a basis of $V(t,s)$, the vertical part of $Y$ is expressed as 
\[(Y^{v})(v_{i})=\left(\nabla_{\hat{Y}}v_{i}\right)_{V^{\bot}}=\sum_{\alpha=m+1}^{n}g\left(\nabla_{\hat{Y}}v_{i},w_{\alpha}\right)w_{\alpha}\]
at $V=V(t)$ by (\ref{dec1}). 
Since $\nabla_{\hat{X}}v_{i}=\nabla_{\hat{X}}w_{\alpha}=0$ at $p_{0}:=\pi(V_{0})$, we have 
\[(\nabla^{\bot}_{X}Y^{v})(v_{i})=\sum_{\alpha=m+1}^{n}g\left(\nabla_{\hat{X}}\nabla_{\hat{Y}}v_{i},w_{\alpha}\right)w_{\alpha}=\left(\nabla_{\hat{X}}\nabla_{\hat{Y}}v_{i}\right)_{V^{\bot}}\]
at $V=V_{0}$ by the definition of $\nabla_{X}^{\bot}Y^{v}$. Then, commuting $X$ and $Y$, we have proved (\ref{v2}). 
When 
\[X=\frac{\partial}{\partial x^{A}}\quad,\quad Y=\frac{\partial}{\partial a^{\alpha}_{i}}, \]
it is enough to prove
\begin{align}\label{v3}
\nabla^{\bot}_{X}Y^{v}=0\quad\mathrm{and}\quad \nabla^{\bot}_{Y}X^{v}=0
\end{align}
at $V_{0}$. 
First, we prove $\nabla^{\bot}_{X}Y^{v}=0$. 
Put $V(t,s):=\Gamma(te_{A},sE_{i}^{\alpha})$, where $E_{i}^{\alpha}$ is a matrix such that its $(j,\beta)$ component is $1$ if $(j,\beta)=(i,\alpha)$ and $0$ otherwise. 
Then we have 
\[\frac{\partial }{\partial t}\bigg|_{t=0}V(t,0)=X(V_{0})\quad\text{and}\quad \frac{\partial }{\partial s}\bigg|_{s=0}V(t,s)=Y(V(t,0)). \]
Since 
\[v_{j}(t,s):=v_{j}(\varphi^{-1}(te_{A}))+sE_{i}^{\alpha}(j,\beta)w_{\beta}(\varphi^{-1}(te_{A}))\] 
is a basis of $V(t,s)$, the vertical part of $Y$ at $V=V(t,0)$ is expressed as 
\[(Y^{v})(v_{j}(t,0))=\begin{cases}w_{\alpha}(te_{A})& \mathrm{if}\quad  j=i \\ \quad0 & \mathrm{if}\quad j\neq i\end{cases}\]
by (\ref{dec1}). 
By this expression and $\nabla_{\hat{X}}w_{\alpha}=0$ at $p_{0}$, we have proved 
\[\nabla^{\bot}_{X}Y^{v}=0. \]
Next, we prove $\nabla^{\bot}_{Y}X^{v}=0$. 
Put $V(t,s):=\Gamma(se_{A},tE_{i}^{\alpha})$. 
Then we have 
\[\frac{\partial }{\partial t}\bigg|_{t=0}V(t,0)=Y(V_{0})\quad\text{and}\quad \frac{\partial }{\partial s}\bigg|_{s=0}V(t,s)=X(V(t,0)). \]
Since  
\[v_{j}(t,s):=v_{j}(\varphi^{-1}(se_{A}))+tE_{i}^{\alpha}(j,\beta)w_{\beta}(\varphi^{-1}(se_{A}))\]
is a basis of $V(t,s)$, 
we have $X^{v}=0$ at each point $V=V(t,0)$ by (\ref{dec1}) and the fact that $\nabla_{\hat{X}}v_{i}=\nabla_{\hat{X}}w_{\alpha}=0$ at $p_{0}$. 
Thus, it is clear that 
\[\nabla^{\bot}_{Y}X^{v}=0. \]
Hence, the proof of torsion-free is completed, and this shows that $\tilde{\nabla}$ is the Levi--Civita connection of $(G_{m}(TN),\tilde{g})$. 
\end{proof}

By the expression (\ref{LC2}), we see that the horizontal part of $\tilde{\nabla}_{X}Y$ is 
\[\biggl[\nabla_{X}\hat{Y}+\frac{1}{2}k(R^{\bot}(\hat{X},\bullet),Y^{v})^{\flat}+\frac{1}{2}k(R^{\bot}(\hat{Y},\bullet),X^{v})^{\flat}\biggr]^{h}\]
and the vertical part of $\tilde{\nabla}_{X}Y$ is 
\[-\frac{1}{2}R^{\bot}(\hat{X},\hat{Y})+\nabla^{\bot}_{X}Y^{v}. \]

\begin{remark}
We can include a positive scaling constant $\alpha$ in the definition of Sasaki metric on $G_{m}(TN)$ as 
\[\tilde{g}_{\alpha}(X,Y):=g(\hat{X},\hat{Y})+\alpha k(X^{v},Y^{v}). \]
By slight modification of the proof of Proposition \ref{LC1}, one can easily see that the Levi--Citvita connection with respect to this Sasaki metric $\tilde{g}_{\alpha}$ is given by 
\begin{align*}
\tilde{\nabla}_{X}Y:=&\biggl[\nabla_{X}\hat{Y}+\frac{\alpha}{2}k(R^{\bot}(\hat{X},\bullet),Y^{v})^{\flat}+\frac{\alpha}{2}k(R^{\bot}(\hat{Y},\bullet),X^{v})^{\flat}\biggr]^{h}\\
&-\frac{1}{2}R^{\bot}(\hat{X},\hat{Y})+\nabla^{\bot}_{X}Y^{v}. 
\end{align*}
\end{remark}

\section{The tension field of a Gauss map}\label{TFG}
In this section, we calculate the tension field of the Gauss map associated with an immersion. 
It is also calculated by Jensen and Rigoli \cite{JensenRigoli} however, we give it for reader's convenience and to write it with our notations. 
Let $M$ be an $\ell$-dimensional manifold, $(N,g)$ be an $n$-dimensional Riemannian manifold and $F:M\to N$ be an immersion. 
Put $m:=n-\ell$ and consider this as the codimension. 
Recall, in Definition \ref{Gauss00}, a smooth map $\gamma_{F}:M\to G_{m}(TN)$ defined by
\[\gamma_{F}(p):=(F_{*}(T_{p}M))^{\bot}\subset T_{F(p)}N\]
is called the Gauss map associated with $F$. 

From now on, we calculate the tension field of the Gauss map $\gamma_{F}:M\to G_{m}(TN)$ 
with respect to the induced Riemannian metric $F^{*}g$ on $M$ and the Sasaki metric $\tilde{g}$ on $G_{m}(TN)$. 
First, we consider the first derivative of $\gamma_{F}$. 
It is clear that $(\pi\circ \gamma_{F})_{*}(X)=F_{*}(X)$ for all tangent vectors on $M$. 
Hence, the horizontal part of $(\gamma_{F})_{*}(X)$ is the horizontal lift of $F_{*}(X)$. 
On the other hand, it is well-known that the vertical part of $(\gamma_{F})_{*}(X)$ is given by the metric dual of the second fundamental form of $F$. 
Precisely, we have 
\[((\gamma_{F})_{*}(X))^{v}=-A(X,\ast)^{\flat\sharp}, \]
where $A$ is the second fundamental form of $F:M\to (N,g)$. 
Note that $A(X,\ast)$ is a section of $T^{*}M\otimes (F_{*}(TM))^{\bot}$. 
We have the identification of $T^{*}M$ with $F_{*}(TM)$ given by composition of the metric dual $T^{*}M\cong TM$ with respect to $F^{*}g$ and the push forward $F_{*}:TM \to F_{*}(TM)$. 
We also have the identification of $(F_{*}(TM))^{\bot}$ with $((F_{*}(TM))^{\bot})^{*}$ by a metric $g$. 
Hence, we have a bundle isomorphism $\iota:T^{*}M\otimes (F_{*}(TM))^{\bot}\to F_{*}(TM)\otimes ((F_{*}(TM))^{\bot})^{*}$, 
and denote $\iota(A(X,\ast))$ by $A(X,\ast)^{\flat\sharp}$, that is, the section of $F_{*}(TM)\otimes ((F_{*}(TM))^{\bot})^{*}$. 
Thus, we have
\begin{align}\label{dgamma}
(\gamma_{F})_{*}(X)=[F_{*}(X)]^{h}-A(X,\ast)^{\flat\sharp}. 
\end{align}
Based on the above formula, we have the following expression of the tension field of $\gamma_{F}:(M,F^{*}g)\to (G_{m}(TN),\tilde{g})$. 
Before the statement, we fix our notations. 
We denote the mean curvature vector field of $F:M\to (N,g)$ by $H$, the Riemannian curvature tensor of $(N,g)$ by $R$ and the normal connection by $\nabla^{N}$. 
Furthermore, we fix an orthonormal local frame $\{\,e_{i}\,\}_{i=1,\dots,\ell}$ of $TM$ with respect to $F^{*}g$ 
and an orthonormal local frame $\{\,\nu_{j}\,\}_{j=1,\dots,m}$ of $(F_{*}(TM))^{\bot}$ with respect to $g$, 
and denote $F_{*}(e_{k})$ by $\overline{e}_{k}$ for short. 

\begin{proposition}\label{tensofGauss}
The tension field of the Gauss map $\gamma_{F}:M \to G_{m}(TN)$ associated with $F:M\to (N,g)$ with respect to the induced metric $F^{*}g$ on $M$ and the Sasaki metric $\tilde{g}$ is given by
\begin{align*}
\tau(\gamma_{F})=&\left[H+\sum_{i=1}^{\ell}\sum_{k=1}^{\ell}R(A(e_{k},e_{i}),\overline{e}_{k},\overline{e}_{i},\bullet)^{\flat}\right]^{h}\\
&-(\nabla^{N} H)^{\flat\sharp}+\sum_{j=1}^{m}\sum_{k=1}^{\ell}\sum_{i=1}^{\ell}\nu_{j}^{*}\otimes R(\overline{e}_{i},\nu_{j},\overline{e}_{i},\overline{e}_{k})\overline{e}_{k}. 
\end{align*}
\end{proposition}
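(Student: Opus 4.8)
The plan is to compute $\tau(\gamma_F) = \sum_{i=1}^{\ell} \tilde{\nabla}_{(\gamma_F)_*(e_i)}(\gamma_F)_*(e_i) - (\gamma_F)_*\left(\sum_i \nabla^M_{e_i} e_i\right)$ directly, using the explicit formula for $\tilde{\nabla}$ in Proposition \ref{LC1} together with the expression \eqref{dgamma} for the differential $(\gamma_F)_*(X) = [F_*(X)]^h - A(X,\ast)^{\flat\sharp}$. First I would fix a point $p \in M$ and choose the orthonormal frame $\{e_i\}$ so that $(\nabla^M_{e_i} e_j)(p) = 0$; this kills the term $(\gamma_F)_*(\sum_i \nabla^M_{e_i} e_i)$ at $p$ and reduces the computation to evaluating $\sum_i \tilde{\nabla}_{(\gamma_F)_*(e_i)}(\gamma_F)_*(e_i)$ at $p$. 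I would write $X_i := (\gamma_F)_*(e_i)$, so that $\hat{X}_i = F_*(e_i) = \overline{e}_i$ and $X_i^v = -A(e_i,\ast)^{\flat\sharp}$, and plug into \eqref{LC2}.

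Next I would organize the result into its horizontal and vertical parts as dictated by Proposition \ref{LC1}. For the horizontal part one gets $\left[\sum_i \nabla_{X_i}\overline{e}_i + \sum_i k(R^\bot(\overline{e}_i,\bullet), X_i^v)^\flat\right]^h$ (the two $k(R^\bot\cdots)$ terms coincide since $X = Y$ here, giving a factor that cancels the $\tfrac12$). The term $\sum_i \nabla_{X_i}\overline{e}_i$ must be unwound: $\nabla_{X_i}\overline{e}_i$ is the covariant derivative of the section $\overline{e}_i$ of $\pi^*(TN)$ along $X_i \in TG_m(TN)$, and since $\pi_*(X_i) = \overline{e}_i$, this is essentially $\nabla^N_{\overline{e}_i}\overline{e}_i$ pulled back, whose normal component assembles into $H$ (the tangential components cancel against the Christoffel corrections from the chosen frame). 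The curvature term $k(R^\bot(\overline{e}_i,\bullet), X_i^v)^\flat$ is where $X_i^v = -A(e_i,\ast)^{\flat\sharp}$ enters: pairing $R^\bot(\overline{e}_i,\xi) \in \mathrm{Hom}(W,W^\bot)$ against $-A(e_i,\ast)^{\flat\sharp}$ with respect to $k$ and taking the metric dual produces, after expanding both in the frames $\{\overline{e}_k\}$ and $\{\nu_j\}$, exactly $\sum_{i,k} R(A(e_k,e_i),\overline{e}_k,\overline{e}_i,\bullet)^\flat$. For the vertical part: $-\tfrac12 R^\bot(\hat{X}_i,\hat{X}_i) = 0$ by antisymmetry, so only $\sum_i \nabla^\bot_{X_i} X_i^v$ survives; expanding this using the definition \eqref{normalnabla} of $\nabla^\bot$ and the Codazzi equation to commute derivatives of $A$, one obtains $-(\nabla^N H)^{\flat\sharp}$ plus the curvature correction $\sum_{j,k,i}\nu_j^*\otimes R(\overline{e}_i,\nu_j,\overline{e}_i,\overline{e}_k)\overline{e}_k$.

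I expect the main obstacle to be the careful bookkeeping in the vertical part: one must differentiate the $(F_*(TM))^\bot$-valued, $T^*M$-valued object $A(\cdot,\ast)^{\flat\sharp}$ as a section along the curves in $G_m(TN)$ traced by $\gamma_F$, keeping track of how the splitting $T_{F(p)}N = F_*(T_pM)\oplus (F_*(T_pM))^\bot$ itself varies; this is exactly what the two terms in \eqref{normalnabla} encode, and the correction term (the second term there, involving $(\nabla v_i)_V$) is what generates the curvature expression via Ricci/Codazzi identities. The cleanest route is to extend $\{e_i\}$ and $\{\nu_j\}$ to frames along $F$, use that at $p$ the tangential derivatives $\nabla^M e_i$, $\nabla^N\nu_j$ (tangential part) vanish, and invoke the Codazzi equation $(\nabla^N_X A)(Y,Z) - (\nabla^N_Y A)(X,Z) = (R(X,Y)Z)^\bot$ to trade the second derivative of $F$ that appears in $\nabla^\bot X_i^v$ for $\nabla^N H$ plus curvature. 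A secondary care point is making sure the metric-dual operations $(\ast)^\flat$, $(\ast)^{\flat\sharp}$ and the identification $\mathcal{V}_W \cong \mathrm{Hom}(W,W^\bot)$ are applied consistently so that the frame expansions land on the stated index patterns; this is routine but must be done with the orientation conventions of Section \ref{RGGB} fixed once and for all.
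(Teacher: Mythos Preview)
Your proposal is correct and follows essentially the same route as the paper's own proof: fix $p$, normalize the frames so that $\nabla^{M}e_{i}=0$ and $\nabla^{N}\nu_{j}=0$ at $p$, plug the decomposition \eqref{dgamma} into the Levi--Civita formula \eqref{LC2}, and then handle the vertical term $\sum_{i}\nabla^{\bot}_{X_{i}}X_{i}^{v}$ via the Codazzi equation to extract $(\nabla^{N}H)^{\flat\sharp}$ plus the curvature correction. One small wording fix: the condition you want on the normal frame is that the \emph{normal} connection derivative $\nabla^{N}_{e_{i}}\nu_{j}$ vanishes at $p$ (not the tangential part of $\nabla\nu_{j}$, which is the shape operator and cannot be made to vanish); this is exactly what the paper assumes and what makes the scalar derivative $e_{i}\,g(\nu_{j},A(e_{i},e_{k}))$ reduce cleanly to $g(\nu_{j},(\nabla_{e_{i}}A)(e_{i},e_{k}))$.
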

\begin{proof}
Fix a point $p$ in $M$ arbitrary. 
We can assume that the covariant derivative of $e_{i}$ with respect to the Levi--Civita connection on $(M,F^{*}g)$ vanishes at $p$ 
and the covariant derivative of $\nu_{j}$ with respect to the normal connection of $F$ also vanish at $p$. 
Then, the tension field of $\gamma_{F}$ at $p$ is given by
\[\tau(\gamma_{F})=\sum_{i=1}^{\ell}(\tilde{\nabla}d\gamma_{F})(e_{i},e_{i})
=\sum_{i=1}^{\ell}\tilde{\nabla}_{(\gamma_{F})_{*}(e_{i})}((\gamma_{F})_{*}(e_{i})). \]
By (\ref{dgamma}), we have 
\begin{align}\label{dgamma2}
\widehat{(\gamma_{F})_{*}(e_{i})}=F_{*}(e_{i})=\overline{e}_{i}\quad\mathrm{and}\quad ((\gamma_{F})_{*}(e_{i}))^{v}=-A(e_{i},\ast)^{\flat\sharp}. 
\end{align}
Substituting (\ref{dgamma2}) into (\ref{LC2}), we have
\begin{align*}
\tilde{\nabla}_{(\gamma_{F})_{*}(e_{i})}((\gamma_{F})_{*}(e_{i}))=&\biggl[\nabla_{\overline{e}_{i}}\overline{e}_{i}
-k(R^{\bot}(\overline{e}_{i},\bullet),A(e_{i},\ast)^{\flat\sharp})^{\flat}\biggr]^{h}\\
&-\nabla^{\bot}_{(\gamma_{F})_{*}(e_{i})}(A(e_{i},\ast)^{\flat\sharp}). 
\end{align*}
First, it is clear that $\nabla_{\overline{e}_{i}}\overline{e}_{i}=A(e_{i},e_{i})$ at $p$, and the sum of these from $i=1$ to $\ell$ is $H$. 
Next, since 
\begin{align*}
R^{\bot}(\overline{e}_{i},\bullet)=&\sum_{j=1}^{m}\nu_{j}^{*}\otimes \left(\sum_{k=1}^{\ell}g(R(\overline{e}_{i},\bullet)\nu_{j},\overline{e}_{k})\overline{e}_{k}\right), \\
A(e_{i},\ast)^{\flat\sharp}=&\sum_{j=1}^{m}\nu_{j}^{*}\otimes \left(\sum_{k=1}^{\ell}g(\nu_{j},A(e_{i},e_{k}))\overline{e}_{k}\right), 
\end{align*}
we have 
\begin{align*}
k(R^{\bot}(\overline{e}_{i},\bullet),A(e_{i},\ast)^{\flat\sharp})=&\sum_{j=1}^{m}\sum_{k=1}^{\ell}g(R(\overline{e}_{i},\bullet)\nu_{j},\overline{e}_{k})g(\nu_{j},A(e_{i},e_{k}))\\
=&-\sum_{k=1}^{\ell}R(A(e_{k},e_{i}),\overline{e}_{k},\overline{e}_{i},\bullet). 
\end{align*}
Finally, we calculate $\nabla^{\bot}_{(\gamma_{F})_{*}(e_{i})}(A(e_{i},\ast)^{\flat\sharp})$. 
By the definition (\ref{normalnabla}) of $\nabla^{\bot}$, we have
\begin{align*}
(\nabla^{\bot}_{(\gamma_{F})_{*}(e_{i})}(A(e_{i},\ast)^{\flat\sharp}))(\nu_{j})=&\left(\nabla_{e_{i}}\left(\sum_{k=1}^{\ell}g(\nu_{j},A(e_{i},e_{k}))\overline{e}_{k}\right)\right)_{V^{\bot}}. 
\end{align*}
Here we remark that $V=T^{\bot}_{p}M$ in this setting and hence $V^{\bot}=F_{*}(T_{p}M)$. 
Thus, $v_{V^{\bot}}$ is the tangential part of $v$ actually. 
At $p$, we have 
\[\left(\nabla_{e_{i}}\left(g(\nu_{j},A(e_{i},e_{k}))\overline{e}_{k}\right)\right)_{V^{\bot}}=g(\nu_{j},(\nabla_{e_{i}}A)(e_{i},e_{k}))\overline{e}_{k}. \]
By the Cadazzi equation, we have 
\[g(\nu_{j},(\nabla_{e_{i}}A)(e_{k},e_{i}))=g(\nu_{j},(\nabla_{e_{k}}A)(e_{i},e_{i}))+g(\nu_{j},R(\overline{e}_{i},\overline{e}_{k})\overline{e}_{i}). \]
Hence, we have
\begin{align*}
&\sum_{i=1}^{\ell}(\nabla^{\bot}_{(\gamma_{F})_{*}(e_{i})}(A(e_{i},\ast)^{\flat\sharp}))\\
=&(\nabla^{N} H)^{\flat\sharp}-\sum_{j=1}^{m}\sum_{k=1}^{\ell}\sum_{i=1}^{\ell}\nu_{j}^{*}\otimes R(\overline{e}_{i},\nu_{j},\overline{e}_{i},\overline{e}_{k})\overline{e}_{k}. 
\end{align*}
Here $(\nabla^{N} H)^{\flat\sharp}$ is an element of $\mathop{\mathrm{Hom}}((F_{*}(TM))^{\bot},F_{*}(TM))$ defined as the metric dual of the assignment $e_{i}\mapsto \nabla_{e_{i}}^{N} H$. 
We completed the proof. 
\end{proof}

\begin{remark}
For a Sasaki metric $\tilde{g}_{\alpha}$ including a positive scaling constant $\alpha$, 
the tension field of the Gauss map $\gamma_{F}:(M,F^{*}g) \to (G_{m}(TN),\tilde{g}_{\alpha})$ associated with $F:M\to (N,g)$ is given by
\begin{align*}
\tau(\gamma_{F})=&\left[H+\alpha\sum_{i=1}^{\ell}\sum_{k=1}^{\ell}R(A(e_{k},e_{i}),\overline{e}_{k},\overline{e}_{i},\bullet)^{\flat}\right]^{h}\\
&-(\nabla^{N} H)^{\flat\sharp}+\sum_{j=1}^{m}\sum_{k=1}^{\ell}\sum_{i=1}^{\ell}\nu_{j}^{*}\otimes R(\overline{e}_{i},\nu_{j},\overline{e}_{i},\overline{e}_{k})\overline{e}_{k}. 
\end{align*}
\end{remark}

\section{Variational vector fields of Gauss maps}\label{VVG}
In this section, we calculate the variational vector field of the 1-parameter family of Gauss maps associated with a 1-parameter family of immersions into a 1-parameter family of Riemannian manifolds. 
Let $N$ be an $n$-dimensional manifold and $g_{t}$ be a 1-parameter family of Riemannian metrics on $N$. 
Let $M$ be an $\ell$-dimensional manifold and $F_{t}:M\to N$ be a 1-parameter family of immersions. 
We assume that $t$ belongs to a time interval $(a,b)$. Define a map $F:M\times (a,b)\to N$ by  $F(p,t):=F_{t}(p)$. 
We denote the variational vector field of $F$ by 
\[V_{t}:=\frac{\partial F_{t}}{\partial t}. \]
Put $m:=n-\ell$ and consider this as the codimension. 
We put time dependent 2-tensors on $M$ and $N$ by 
\[P_{t}:=\frac{\partial}{\partial t} (F_{t}^{*}g_{t})\quad\mathrm{and}\quad Q_{t}:=\frac{\partial}{\partial t}g_{t}, \]
respectively. Then, for each time $t$, we get the Gauss map
\[\gamma_{F_{t}}:M\to G_{m}(TN)\quad;\quad \gamma_{F_{t}}(p):=(F_{t*}(T_{p}M))^{\bot_{t}}. \]
We remark that $\bot_{t}$ takes the normal part with respect to the ambient metric $g_{t}$, hence it depends on time. 
We will calculate the variational vector field of $\gamma_{F_{t}}$ at a fixed time $t=t_{0}$. 
A main tool to do this simply is Uhlenbeck's trick, which takes a nice time dependent orthonormal frame. 

First, fix a point $p$ in $M$ and an orthonormal local frames $(U,(f_{1},\dots,f_{\ell}))$ of $TM$ around $p$, 
with respect to the induced metric $F_{t_{0}}^{*}g_{t_{0}}$, such that its covariant derivative with respect to the Levi--Civita connection $F_{t_{0}}^{*}g_{t_{0}}$ vanishes at $p$. 
This $(f_{1},\dots,f_{\ell})$ does not depend on time. 
Next, solve the ODE: 
\begin{align}\label{Uh1}
\frac{\partial}{\partial t}e_{i}=-\frac{1}{2}\left(P_{t}(e_{i},\ast)\right)^{\flat_{t}}
\end{align}
with condition $e_{i}(t_{0})=f_{i}$. 
Here we put some remarks. 
For a time dependent vector field $e_{i}=e_{i}^{j}(x,t)\frac{\partial}{\partial x^{j}}$ on $M$, we define 
\[\frac{\partial}{\partial t}e_{i}:=\left(\frac{\partial}{\partial t}e_{i}^{j}(x,t)\right)\frac{\partial}{\partial x^{j}}, \]
and this is of course a time dependent vector field on $M$. 
In (\ref{Uh1}), $\flat_{t}$ means the metric dual of a 1-form on $M$ with respect to $F_{t}^{*}g_{t}$. 
Hence, (\ref{Uh1}) is a linear ODE for $e_{i}$ and we have a unique solution $e_{i}$ on $U$. 
By direct computation with (\ref{Uh1}), one can easily check that $\frac{\partial}{\partial t}((F_{t}^{*}g_{t})(e_{i},e_{j}))=0$. 
Hence $(U,(e_{1},\dots,e_{\ell}))$ is orthonormal with respect to $F_{t}^{*}g_{t}$ for all time $t$. 
This is a nice time dependent local frame field of $TM$. 

Next, we take a nice time dependent local frame field of 
\[(F_{t*}(TM))^{\bot_{t}}\]
by a similar way as above. 
For its purpose, we first define a connection $\nabla^{F}$ of the induced bundle $F^{*}(TN)$ over $M\times(a,b)$ as follows. 
For our aim, it is sufficient to define $\nabla_{\frac{\partial}{\partial t}}^{F}X$ for a section $X$ of the form $X=X^{\alpha}(x,t)\frac{\partial}{\partial y^{\alpha}}$. 
Then we define it by 
\begin{align}\label{tderi}
\nabla_{\frac{\partial}{\partial t}}^{F}X:=\left(\frac{\partial}{\partial t}X^{\alpha}+\Gamma^{\alpha}_{\gamma\delta}V^{\gamma}X^{\delta}  \right)\frac{\partial}{\partial y^{\alpha}}, 
\end{align}
where $(y^{\alpha})_{\alpha=1}^{n}$ is a local coordinates on $N$ and $\Gamma^{\alpha}_{\gamma\delta}$ are Christoffel symbols of $g_{t}$ with respect to $(y^{\alpha})_{\alpha=1}^{n}$. 
Actually, this is just the induced connection on $(a,b)$ by a map $(a,b)\ni t\mapsto F_{t}(x)\in N$ for each $x$ in $M$. 
Then, one can check that this does not depend on the choice of coordinates. 
Let $(U,(\xi_{1},\dots,\xi_{m}))$ be a time independent orthonormal local frame of $(F_{t_{0}*}(TM))^{\bot_{t_{0}}}$ around $p$, 
with respect to $g_{t_{0}}$, such that its covariant derivative with respect to the normal connection of $F_{t_{0}}$ vanishes at $p$. 
Next, solve the ODE: 
\begin{align}\label{Uh2}
\nabla_{\frac{\partial}{\partial t}}^{F}\nu_{i}
=-\frac{1}{2}\left(\left(Q_{t}(\nu_{i},\bullet)\right)^{\flat_{t}}\right)^{\bot_{t}}
-Q_{t}(\nu_{i},\overline{e}_{k})\overline{e}_{k}
-g_{t}\left(\nu_{i},\nabla_{\frac{\partial}{\partial t}}^{F}\overline{e}_{k}\right)\overline{e}_{k}
\end{align}
with condition $\nu_{i}(t_{0})=\xi_{i}$. 
Here $\flat_{t}$ is the metric dual of a 1-form on $N$ with respect to $g_{t}$, 
$(U,(e_{1},\dots,e_{\ell}))$ is the nice time dependent local frame field taken as above and we put $\overline{e}_{j}:=F_{t*}(e_{j})$. 
Since (\ref{Uh2}) is well-defined and a linear ODE for $\nu_{i}\in \Gamma(U,F^{*}(TM)|_{U})$ of the from $\nu_{i}=\nu_{i}^{\alpha}(x,t)\frac{\partial}{\partial y^{\alpha}}$, 
we have a unique solution $\nu_{i}$ on $U$. 
Additionally, the time derivative (\ref{tderi}) satisfies the following Leibniz rule: 
\begin{align}\label{Lei1}
\frac{\partial}{\partial t}(g_{t}(X,Y))=Q_{t}(X,Y)+g_{t}\left(\nabla^{F}_{\frac{\partial}{\partial t}}X,Y \right)+g_{t}\left(X,\nabla^{F}_{\frac{\partial}{\partial t}}Y\right). 
\end{align}
Then, by (\ref{Uh2}) and (\ref{Lei1}), we have $\frac{\partial}{\partial t}(g_{t}(\nu_{i},\overline{e}_{j}))=0$. 
Hence, we have proved that $\nu_{i}$ is a section of $(F_{t}^{*}(TM))^{\bot_{t}}$ actually. 
Again, by (\ref{Uh2}) and (\ref{Lei1}) with the fact that $g_{t}(\nu_{i},\overline{e}_{k})=0$, we have $\frac{\partial}{\partial t}(g_{t}(\nu_{i},\nu_{j}))=0$. 
Hence, we have proved that $(U,(\nu_{1},\dots,\nu_{m}))$ is an orthonormal local frame field of $(F_{t}^{*}(TM))^{\bot_{t}}$ with respect to $g_{t}$ for each $t\in (a,b)$. 
By using these nice time dependent orthonormal frame fields, we prove the following. 

\begin{proposition}\label{varGauss}
The variational vector field of $\gamma_{F_{t}}:M\to G_{m}(TN)$ is given by
\[\frac{\partial}{\partial t}\gamma_{F_{t}}=\left[V_{t}\right]^{h}-\left(\nabla^{N}V_{t}\right)^{\flat\sharp}
-\sum_{i=1}^{m}\sum_{k=1}^{\ell}\nu_{i}^{*}\otimes\left(Q_{t}(\nu_{i},\overline{e}_{k})\right)\overline{e}_{k}. \]
\end{proposition}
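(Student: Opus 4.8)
The plan is to differentiate the curve $t\mapsto\gamma_{F_t}(p)$ in $G_m(TN)$ at a fixed point $p\in M$ and read off its horizontal and vertical components via the explicit decomposition (\ref{dec1}), using the time-dependent orthonormal frame $(\nu_1,\dots,\nu_m)$ of $(F_{t*}(TM))^{\bot_t}$ constructed above through Uhlenbeck's trick as the chosen basis of the fibre. The horizontal component is immediate: since $\pi\circ\gamma_{F_t}=F_t$, one has $\pi_*\bigl(\frac{\partial}{\partial t}\gamma_{F_t}(p)\bigr)=\frac{\partial}{\partial t}F_t(p)=V_t(p)$, so by (\ref{dec1}) the horizontal part of $\frac{\partial}{\partial t}\gamma_{F_t}$ equals $[V_t]^h$.

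For the vertical component, the key point is that in (\ref{dec1}) the curve parameter is now $t$, and the base point $F_t(p)$ moves with $t$; hence the covariant derivative appearing there is the one taken along the curve $t\mapsto F_t(p)$, i.e.\ precisely the connection $\nabla^F_{\frac{\partial}{\partial t}}$ of (\ref{tderi}). Since $(\nu_1(t),\dots,\nu_m(t))$ is a basis of $\gamma_{F_t}(p)=(F_{t*}(T_pM))^{\bot_t}$ for all $t$, formula (\ref{dec1}) gives at $t=t_0$
\[\left(\frac{\partial}{\partial t}\gamma_{F_t}\right)^{v}=\sum_{i=1}^{m}\nu_i^{*}\otimes\left(\nabla^F_{\frac{\partial}{\partial t}}\nu_i\right)_{F_{t_0*}(T_pM)},\]
where the subscript denotes the tangential part (the ``$V^{\bot}$''-part, since here $V=\gamma_{F_{t_0}}(p)$ and $V^{\bot}=F_{t_0*}(T_pM)$). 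I would then substitute the defining ODE (\ref{Uh2}): its first term $-\frac12\bigl((Q_t(\nu_i,\bullet))^{\flat_t}\bigr)^{\bot_t}$ is normal and so drops out of the tangential part, while the other two terms are multiples of the $\overline{e}_k$ and hence already tangential. This yields, at $t_0$,
\[\left(\nabla^F_{\frac{\partial}{\partial t}}\nu_i\right)_{F_{t_0*}(T_pM)}=-\sum_{k=1}^{\ell}Q_t(\nu_i,\overline{e}_k)\overline{e}_k-\sum_{k=1}^{\ell}g_t\Bigl(\nu_i,\nabla^F_{\frac{\partial}{\partial t}}\overline{e}_k\Bigr)\overline{e}_k.\]

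It then remains to rewrite the last sum. From the coordinate formula (\ref{tderi}), using $[\frac{\partial}{\partial t},\frac{\partial}{\partial x^j}]=0$ together with the symmetry of the Christoffel symbols of $g_t$, a short computation gives $\nabla^F_{\frac{\partial}{\partial t}}\overline{e}_k=F_{t*}\bigl(\frac{\partial}{\partial t}e_k\bigr)+\nabla_{e_k}V_t$, where $\nabla_{e_k}V_t$ is the covariant derivative of $V_t$ along $e_k$ as a section of $F_t^{*}(TN)$. Since $F_{t*}\bigl(\frac{\partial}{\partial t}e_k\bigr)$ is tangent to $F_t(M)$ while $\nu_i$ is normal, one has $g_t\bigl(\nu_i,\nabla^F_{\frac{\partial}{\partial t}}\overline{e}_k\bigr)=g_t(\nu_i,\nabla_{e_k}V_t)$, which sees only the normal part of $\nabla_{e_k}V_t$; consequently $\sum_{i=1}^{m}\sum_{k=1}^{\ell}\nu_i^{*}\otimes g_t(\nu_i,\nabla_{e_k}V_t)\overline{e}_k$ is exactly the element $(\nabla^N V_t)^{\flat\sharp}$ of the statement (cf.\ Section \ref{TFG}). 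Combining the horizontal contribution $[V_t]^h$ with these two pieces of the vertical part gives the asserted formula.

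The step that I expect to need the most care is the ``moving base point'' reading of (\ref{dec1}) used to obtain the displayed expression for $(\frac{\partial}{\partial t}\gamma_{F_t})^{v}$: one must verify that the derivative entering (\ref{dec1}) is $\nabla^F_{\frac{\partial}{\partial t}}$ (not an ambient derivative at a fixed point) and that $(\nu_1(t),\dots,\nu_m(t))$ remains a basis of $\gamma_{F_t}(p)$ for all $t$ near $t_0$. Both facts are exactly what the construction of the frames preceding the proposition was arranged to provide, and the ODE (\ref{Uh2}) was set up precisely so that the normal term of $\nabla^F_{\frac{\partial}{\partial t}}\nu_i$ disappears from the vertical computation while the tangential terms reproduce $Q_t$ and $\nabla^F_{\frac{\partial}{\partial t}}\overline{e}_k$. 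Everything after that is routine manipulation with the orthogonal splitting $T_{F_t(p)}N=F_{t*}(T_pM)\oplus(F_{t*}(T_pM))^{\bot_t}$.
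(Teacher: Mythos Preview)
Your proposal is correct and follows essentially the same route as the paper's proof: both compute the horizontal part via $\pi\circ\gamma_{F_t}=F_t$, read off the vertical part from (\ref{dec1}) using the Uhlenbeck frame $(\nu_1,\dots,\nu_m)$, substitute the ODE (\ref{Uh2}) to extract the tangential component of $\nabla^F_{\frac{\partial}{\partial t}}\nu_i$, and then invoke the identity $\nabla^F_{\frac{\partial}{\partial t}}\overline{e}_k=\nabla_{e_k}V_t+F_{t*}(\frac{\partial}{\partial t}e_k)$ together with orthogonality of $\nu_i$ and $F_{t*}(\frac{\partial}{\partial t}e_k)$. Your discussion of the ``moving base point'' subtlety is exactly the point the paper's setup of $\nabla^F_{\frac{\partial}{\partial t}}$ was designed to handle.
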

\begin{proof}
Since $\pi(\gamma_{F_{t}}(p))=F_{t}(p)$, it is clear that 
\[\pi_{*}\left(\frac{\partial}{\partial t}\gamma_{F_{t}}\right)=\frac{\partial F_{t}}{\partial t}=V_{t}. \]
Hence the horizontal part of $\frac{\partial}{\partial t}\gamma_{F_{t}}$ is $[V_{t}]^{h}$. 
Let $e_{i}$ and $\nu_{j}$ be as above. Then, at $p$, we have
\[\gamma_{F_{t}}(p)=(F_{t*}(T_{p}M))^{\bot_{t}}=\mathop{\mathrm{Span}}\{\nu_{1}(p,t),\dots,\nu_{m}(p,t)\}. \]
Then, by (\ref{dec1}), we have
\[\left(\frac{\partial}{\partial t}\gamma_{F_{t}}\right)^{v}=\sum_{i=1}^{m}\nu_{i}^{*}\otimes \left(\nabla_{\frac{\partial}{\partial t}}^{F}\nu_{i}\right)^{\top_{t}}. \]
Since $\nu_{i}$ satisfies (\ref{Uh2}), we have
\[\left(\nabla_{\frac{\partial}{\partial t}}^{F}\nu_{i}\right)^{\top_{t}}=
-Q_{t}(\nu_{i},\overline{e}_{k})\overline{e}_{k}
-g_{t}\left(\nu_{i},\nabla_{\frac{\partial}{\partial t}}^{F}\overline{e}_{k}\right)\overline{e}_{k}. \]
By the definition (\ref{tderi}), we can easily see that
\begin{align}\label{tanderi}
\nabla_{\frac{\partial}{\partial t}}^{F}\overline{e}_{k}=\nabla_{\frac{\partial}{\partial t}}^{F}(F_{t*}(e_{k}))=&\nabla_{e_{k}}V_{t}+F_{t*}\left(\frac{\partial}{\partial t}e_{k}\right). 
\end{align}
Since $\nu_{i}$ is a section of $(F_{t}^{*}(TM))^{\bot_{t}}$, the inner product of $\nu_{i}$ and the second term of the right hand side of (\ref{tanderi}) is zero. 
Thus, we have 
\[\left(\frac{\partial}{\partial t}\gamma_{F_{t}}\right)^{v}=-\sum_{i=1}^{m}\nu_{i}^{*}\otimes \left(\sum_{k=1}^{\ell}g_{t}(\nu_{i},\nabla^{N}_{e_{k}}V_{t})\overline{e}_{k}\right)
-\sum_{i=1}^{m}\sum_{k=1}^{\ell}\nu_{i}^{*}\otimes\left(Q_{t}(\nu_{i},\overline{e}_{k})\right)\overline{e}_{k}. \]
This completes the proof. 
\end{proof}
\section{Gauss maps of the coupled flow}\label{GCF}
This section is devoted to the proof of Theorem \ref{main1}. 
First, we recall the assumptions. 
Let $(N,g_{t})$, $f_{t}:N\to \mathbb{R}$ and $F_{t}:M\to N$ be a 1-parameter family of $n$-dimensional Riemannian manifolds, 
smooth functions on $N$ and immersions from an $\ell$-dimensional manifold $M$ respectively defined on a time interval $[0,T)$ satisfying (\ref{rf1}) and (\ref{rmcf1}). 

\begin{proof}[Proof of Theorem \ref{main1}]
Fix a point $p$ in $M$. We prove the equation at $p$. Let $\{\,e_{i}\,\}_{i=1}^{\ell}$ and $\{\,\nu_{j}\,\}_{j=1}^{m}$ be nice time dependent local frames around $p$ taken as in Section \ref{VVG}. 
At each fixed time $t\in(0,T)$, by taking the vertical part of the equation proved in Proposition \ref{tensofGauss}, we have
\begin{align}\label{eqC}
\begin{aligned}
\tau(\gamma_{F_{t}})^{v}=&-(\nabla^{N} H)^{\flat\sharp}+\sum_{j=1}^{m}\sum_{k=1}^{\ell}\sum_{i=1}^{\ell}\nu_{j}^{*}\otimes R(\bar{e}_{i},\nu_{j},\bar{e}_{i},\bar{e}_{k})\bar{e}_{k}\\
=&-(\nabla^{N} H)^{\flat\sharp}+\sum_{j=1}^{m}\sum_{k=1}^{\ell}\nu_{j}^{*}\otimes \mathrm{Ric}(\nu_{j},\bar{e}_{k})\bar{e}_{k}\\
&-\sum_{j=1}^{m}\sum_{k=1}^{\ell}\sum_{i=1}^{m}\nu_{j}^{*}\otimes R(\nu_{i},\nu_{j},\nu_{i},\bar{e}_{k})\bar{e}_{k}, 
\end{aligned}
\end{align}
where we write $H_{g_{t}}(F_{t})$ as $H$ for short, and the Riemannian curvature tensor of $g_{t}$ as $R$. 
The third term of the right hand side is nothing but $-\mathcal{R}(g_{t})\circ \gamma_{F_{t}}$ since $\{\,\nu_{j}\,\}_{j=1}^{m}$ is an orthonormal basis of $\gamma_{F_{t}}$ 
and $\{\,\bar{e}_{i}\,\}_{i=1}^{\ell}$ is an orthonormal basis of $(\gamma_{F_{t}})^{\bot}$, see Definition \ref{RgF}. 
Next, by taking the vertical part of the equation proved in Proposition \ref{varGauss}, we have
\begin{align}\label{varGauss2}
\left(\frac{\partial \gamma_{F_{t}}}{\partial t}\right)^{v}=-\left(\nabla^{N}V_{t}\right)^{\flat\sharp}-\sum_{j=1}^{m}\sum_{k=1}^{\ell}\nu_{j}^{*}\otimes\left(Q_{t}(\nu_{j},\overline{e}_{k})\right)\overline{e}_{k}. 
\end{align}
Since $g_{t}$ and $F_{t}$ satisfy (\ref{rf1}) and (\ref{rmcf1}), we have
\begin{align}\label{eqA}
V_{t}=\frac{\partial F_{t}}{\partial t}=H\quad\text{and}\text \quad Q_{t}=\frac{\partial g_{t}}{\partial t}=-\mathrm{Ric}+f_{t}g_{t}. 
\end{align}
Since $\nu_{i}$ and $\overline{e}_{k}$ are orthogonal, we have
\begin{align}\label{eqB}
Q_{t}(\nu_{i},\overline{e}_{k})=-\mathrm{Ric}(\nu_{i},\overline{e}_{k})+f_{t}g_{t}(\nu_{i},\overline{e}_{k})=-\mathrm{Ric}(\nu_{i},\overline{e}_{k}). 
\end{align}
Substituting (\ref{eqA}) and (\ref{eqB}) into (\ref{varGauss2}), we have 
\begin{align}\label{eqD}
\left(\frac{\partial \gamma_{F_{t}}}{\partial t}\right)^{v}=-\left(\nabla^{N}H\right)^{\flat\sharp}+\sum_{j=1}^{m}\sum_{k=1}^{\ell}\nu_{j}^{*}\otimes\left(\mathrm{Ric}(\nu_{j},\overline{e}_{k})\right)\overline{e}_{k}. 
\end{align}
Comparing (\ref{eqC}) with (\ref{eqD}), we have 
\begin{align}\label{eqE}
\left(\frac{\partial \gamma_{F_{t}}}{\partial t}\right)^{v}=\tau(\gamma_{F_{t}})^{v}+\mathcal{R}(g_{t})\circ \gamma_{F_{t}}. 
\end{align}
Then, Theorem \ref{main1} has been proved. 
\end{proof}
\section{Associated subsolutions}\label{Asub}
This section is devoted to the proof of Theorem \ref{main2}. 
First, we recall our situation. 
Let $(N,g_{t})$, $f_{t}:N\to \mathbb{R}$ and $F_{t}:M\to N$ be a 1-parameter family of $n$-dimensional Riemannian manifolds, 
smooth functions on $N$ and immersions from an $(n-1)$-dimensional manifold $M$ respectively defined on a time interval $[0,T)$ satisfying (\ref{rf1}) and (\ref{rmcf1}). 
Assume that a smooth function $\rho:\mathbb{P}(TN)\to \mathbb{R}$ satisfies (\ref{horicon}) and (\ref{hessC}). 

\begin{proof}[Proof of Theorem \ref{main2}]
In general, we have
\begin{align}
&\frac{\partial}{\partial t}\left(\rho\circ\gamma_{F_{t}}\right)=\tilde{g_{t}}\left(\nabla^{t}\rho,\frac{\partial \gamma_{F_{t}}}{\partial t} \right) \label{eq61}\\
&\Delta_{F_{t}^{*}g_{t}}\left(\rho\circ\gamma_{F_{t}}\right)=\mathop{\mathrm{tr}}\left( \gamma_{F_{t}}^{*}\left(\mathop{\mathrm{Hess}_{t}}\rho \right)\right) \label{eq62}
+\tilde{g_{t}}\left(\nabla^{t}\rho,\tau(\gamma_{F_{t}}) \right), 
\end{align}
where $\gamma_{F_{t}}^{*}\left(\mathop{\mathrm{Hess}_{t}}\rho \right)$ is the pull-back of the 2-tensor $\mathop{\mathrm{Hess}_{t}}\rho$ by $\gamma_{F_{t}}$. 
By (\ref{eq61}), (\ref{eq62}) and (\ref{horicon}), we have
\[\left(\frac{\partial}{\partial t}-\Delta\right)(\rho\circ\gamma_{F_{t}})
=\tilde{g_{t}}\left(\left(\nabla^{t}\rho\right)^{v},\left(\frac{\partial \gamma_{F_{t}}}{\partial t}-\tau(\gamma_{F_{t}})\right)^{v}\right)-\mathop{\mathrm{tr}}\left( \gamma_{F_{t}}^{*}\left(\mathop{\mathrm{Hess}_{t}}\rho \right)\right). \]
Since $\gamma_{F_{t}}$ is a vertically harmonic map heat flow by Corollary \ref{cor1}, we have
\[\left(\frac{\partial}{\partial t}-\Delta\right)(\rho\circ\gamma_{F_{t}})
=-\mathop{\mathrm{tr}}\left( \gamma_{F_{t}}^{*}\left(\mathop{\mathrm{Hess}_{t}}\rho \right)\right). \]
Combining the assumption (\ref{hessC}), we have
\[\left(\frac{\partial}{\partial t}-\Delta\right)(\rho\circ\gamma_{F_{t}})\leq C|(\gamma_{F_{t}})_{*}|^2. \]
By (\ref{dgamma2}), it is easy to show that $|(\gamma_{F_{t}})_{*}|^2=(n-1)+|A(F_{t})|^2$. 
Thus, the proof has been completed. 
\end{proof}
\appendix\section{\empty}
In this appendix, we put a problem. 
Let $(N,g)$ be an $n$-dimensional Riemannian manifold and $M$ be an $\ell$-dimensional manifold. Put $m:=n-\ell$. 
We denote by $C^{\infty}_{\mathrm{imm}}(M,G_{m}(TN))$ the set of all smooth maps $\gamma:M\to G_{m}(TN)$ so that $\pi\circ \gamma:M\to N$ becomes an immersion, 
where $\pi:G_{m}(TN)\to N$ is the projection. 
Then, one can consider the following evolution equation defined on $C^{\infty}_{\mathrm{imm}}(M,G_{m}(TN))$; 
\begin{align}\label{general2}
\left(\frac{\partial \gamma_{t}}{\partial t}\right)^{v}=\tau(\gamma_{t})^{v}+\mathcal{R}(g)\circ \gamma_{t}, 
\end{align}
where $\tau(\gamma_{t})$ is the tension field of $\gamma_{t}:M\to G_{m}(TN)$ 
with respect to a Riemannian metric $(\pi\circ \gamma_{t})^{*}g$ on $M$ and the Sasaki metric $\tilde{g}$ on $G_{m}(TN)$. 
The difference between (\ref{general2}) and (\ref{general}) is time-independence of $\mathcal{R}(g)$. 

Let $(N,g)$ be Ricci-flat. 
Then $g_{t}:=g$ is a solution of (\ref{rf1}) with $f_{t}:=0$. 
Hence, by Theorem \ref{main1}, Gauss maps $\gamma_{t}:=\gamma_{F_{t}}$ associated with a mean curvature flow $F_{t}:M\to (N,g)$ actually satisfy (\ref{general2}). 
Thus, when $(N,g)$ is Ricci-flat, (\ref{general2}) has many solutions and is meaningful. 
However, when the Ricci-flat condition is dropped, we do not know the evolution equation (\ref{general2}) is meaningful so far. 
Hence, the following might be an interesting problem. 

\begin{problem}
For a general Riemannian manifold $(N,g)$, does there exist a functional $E$ on $C^{\infty}_{\mathrm{imm}}(M,G_{m}(TN))$ such that 
its gradient flow equation is equal to the evolution equation (\ref{general2})?
\end{problem}


\begin{thebibliography}{9}

\bibitem{HanLi}
X. Han and J. Li. 
The Lagrangian mean curvature flow along the K\"ahler-Ricci flow. 
{\em Recent developments in geometry and analysis,} 147--154, Adv. Lect. Math. (ALM), 23, {\em Int. Press, Somerville, MA,} 2012. 

\bibitem{JensenRigoli}
G. R. Jensen and M. Rigoli. Harmonic Gauss maps. 
{\em Pacific J. Math}. {\bf 136} (1989), no. 2, 261--282. 

\bibitem{LotayPacini}
J. D. Lotay and T. Pacini. 
Coupled flows, convexity and calibrations: Lagrangian and totally real geometry. 
arXiv:1404.4227.

\bibitem{Muller}
R. M\"uller. 
Ricci flow coupled with harmonic map flow. 
{Ann. Sci. \'Ec. Norm. Sup\'er. (4)} {\bf 45} (2012), no.1, 101--142.

\bibitem{RamosRipoll}
\'A. Ramos and J. Ripoll. An extension of Ruh-Vilms' theorem to hypersurfaces in symmetric spaces and 
some applications. {\em Trans. Amer. Math. Soc}. {\bf 368} (2016), 4731--4749. 

\bibitem{RuhVilms}
E. A. Ruh and J. Vilms. The tension field of the Gauss map. 
{\em Trans. Amer. Math. Soc}. {\bf 149} (1970), 569--573. 

\bibitem{Wang}
M.-T. Wang. Gauss maps of the mean curvature flow. 
{\em Math. Res. Lett}. {\bf 10} (2003), no. 2-3, 287--299.

\bibitem{Wood}
C. M. Wood. The Gauss section of a Riemannian immersion. 
{\em J. London Math. Soc. (2)} {\bf 33} (1986), no. 1, 157--168. 

\bibitem{Yamamoto2}
H. Yamamoto. 
Ricci-mean curvature flows in gradient shrinking Ricci solitons. 
arXiv:1501.06256.

\end{thebibliography}
\end{document}